\numberwithin{equation}{section}
\DeclareMathOperator{\Aut}{Aut}
\begin{document}
\newcommand{\s}{\vspace{0.2cm}}

\newtheorem{theo}{Theorem}
\newtheorem{prop}{Proposition}
\newtheorem{coro}{Corollary}
\newtheorem{lemm}{Lemma}
\newtheorem{claim}{Claim}
\newtheorem{example}{Example}
\theoremstyle{remark}
\newtheorem{rema}{\bf Remark}
\newtheorem{defi}{\bf Definition}

\title[ Riemann surfaces of genus $q$ with $4q$ automorphisms]{On the one-dimensional family of Riemann surfaces of genus $q$ with $4q$ automorphisms}
\date{}

\author{Sebasti\'an Reyes-Carocca}
\address{Departamento de Matem\'atica y Estad\'istica, Universidad de La Frontera, Avenida Francisco Salazar 01145, Temuco, Chile.}
\email{sebastian.reyes@ufrontera.cl}

\thanks{Partially supported by Postdoctoral Fondecyt Grant 3160002, Redes Etapa Inicial Grant REDI-170071 and Anillo ACT1415 PIA  CONICYT Grant}
\keywords{Riemann surfaces, group actions, Jacobians, fields of definition}
\subjclass[2010]{30F10, 14H37, 14H40}

\begin{abstract} Bujalance, Costa and Izquierdo have recently proved that all those Riemann surfaces of genus $g \ge 2$ different from $3, 6, 12, 15$ and 30, with exactly $4g$ automorphisms form an equisymmetric one-dimensional family, denoted by $\mathcal{F}_g.$ In this paper, for every prime number $q \ge 5,$ we explore further properties of each Riemann surface $S$ in $\mathcal{F}_q$ as well as of its Jacobian variety $JS.$
\end{abstract}
\maketitle

\section{Introduction}Automorphism groups of compact Riemann surfaces have been extensively studied, going back to Wiman, Klein and Hurwitz, among others. 

It is classically known that the full automorphism group of a Riemann surface of genus $g \ge 2$ is finite; its size is bounded by $84(g-1).$ Moreover, there are infinitely many integers $g$ for which this bound is attained; see \cite{McB2}.

\s

Usually when additional conditions are imposed on a group of automorphisms, a smaller bound for its order is obtained; for example, classical results assert that in the abelian and cyclic case these bounds are $4g+4$ and $4g+2$ respectively. 

It is an interesting problem to understand the extent to which the order of the full automorphism group determines the Riemann surface; see for example \cite{K1}, \cite{K2} and \cite{Naka}.

\s

Very recently, Costa and Izquierdo have proved that the maximal order of the form $ag+b$ (for fixed integers $a$ and $b$) of the full automorphism group of equisymmetric and one-dimensional families of Riemann surfaces of genus $g \ge 2$ appearing in all genera is $4g+4.$ Moreover, they constructed explicit families attaining this bound; see \cite{CIn}. The second possible largest order is $4g.$

\s

Bujalance, Costa and Izquierdo have recently determined all those Riemann surfaces of genus $g \ge 2$ with exactly $4g$ automorphisms. More precisely, following \cite[Theorem 7]{costa}, if $g$ is different from the exceptional values $3, 6, 12, 15$ and 30, then the Riemann surfaces of genus $g$ admitting exactly $4g$ automorphisms form an equisymmetric one-dimensional family, denoted by $\mathcal{F}_g.$  Moreover, if $S$ is a Riemann surface in $\mathcal{F}_g$ then its full automorphism group $G$ is isomorphic to a dihedral group, and the corresponding quotient $S/G$ has genus zero.

\s

The present article is devoted to study further properties of each member $S$ of the family $\mathcal{F}_g$ and of its Jacobian variety $JS.$ In spite of the fact that the results of this paper might be stated for each integer $g \ge 2$ different from $3, 6, 12, 15$ and 30, for the sake of simplicity we shall restrict to the case $g=q \ge 5$ prime.

\s

This paper is organized as follows.

\s

In Section \ref{s2} we shall introduce the basic background; namely, group actions on Riemann surfaces, complex tori and abelian varieties, representation of groups, and the group algebra decomposition theorem for Jacobians.

\s

In Section \ref{s3} we shall take advantage of the hyperellipticity of the Riemann surfaces in the family $\mathcal{F}_q$ (see \cite[Remark 9]{costa}) to determine explicit algebraic descriptions of them. In addition, with respect to these models, we will provide realizations of their full automorphism groups. 

\s

 If a finite group $G$ acts on a Riemann surface $S,$ then this action induces a $G$-equivariant isogeny decomposition of $JS$ into a product of abelian subvarieties, of the form $JS \sim_{G} \Pi_{i=1}^rB_i^{n_i}.$ This decomposition (known as the {\it group algebra decomposition} of $JS$ with respect to $G;$ see \cite{cr} and \cite{l-r}) only depends on the algebraic structure of the group; however, further information such as the dimension of each factor $B_i$ depends on the geometry of the action. Following \cite{yoibero}, the dimension of each $B_i$ is explicitly given after choosing a {\it generating vector} of $G$ representing the action on $S.$ Because of this dependence, the general question arises about how the choice of such a generating vector affects the group algebra decomposition of $JS$.

In Section \ref{s4} we shall give a complete answer to the aforementioned question, for each Riemann surface $S$ in the family $\mathcal{F}_q$. To prove this result, we shall begin by proving some lemmata concerning the rational representations of $G,$ and the possible generating vectors representing the action. We shall also prove that each Jacobian $JS$ contains an elliptic curve, and that it decomposes into a product of Jacobians of quotients of $S.$

\s

In Section \ref{s5} we shall explore the fields of definition of the Riemann surfaces $S$ in the family $\mathcal{F}_q$. More precisely, we shall give a characterization for when $S$ and $JS$ can be defined, as projective algebraic varieties, by polynomials with real coefficients, and by polynomials with algebraic coefficients. Furthermore, in the latter case we prove that $JS$ decomposes in terms of abelian subvarieties which can also be defined by polynomials with algebraic coefficients. We shall also observe that $S$ and $JS$ can be defined over the field of moduli of $S$.

\s

 Finally, in Section \ref{s6} we shall compute the dimension of a {\it special variety} in the moduli space $\mathcal{A}_q$ of principally polarized abelian varieties of dimension $q$ associated to each Riemann surface $S$ in the family $\mathcal{F}_q,$ called the {\it Shimura family}; see \cite{Wolfart2}. Moreover, for the particular case $q=5,$ we will be able to describe its elements --by exhibiting period matrices-- as members of a three-dimensional family in $\mathcal{A}_5$ admitting a fixed action of the dihedral group of order 20.

\s

{\bf Acknowledgements.} The author wishes to express his gratitude to Anita M. Rojas for sharing her SAGE routines with him.

\section{Preliminaries} \label{s2}

\subsection{Group actions on Riemann surfaces}
Let $S$ be a compact Riemann surface and let $G$ be a finite group. We denote by $\mbox{Aut}(S)$ the full automorphism group of $S,$ and say that $G$ acts on $S$ if there is a group monomorphism $\psi: G\to \Aut(S).$ The space of orbits $S/G$ of the action of $G \cong \psi(G)$ on $S$ is naturally endowed with a Riemann surface structure in such a way that the natural projection $\pi : S \to S/G$ is holomorphic. The degree of $\pi$ is the order $|G|$ of $G$ and the multiplicity of $\pi$ at $p \in S$ is $|G_p|$, where $G_p$ denotes the stabilizer of $p$ in $G$. If $|G_p| \neq 1$ then $p$ is called a \textit{branch point} and its image by $\pi$ is a \textit{branch value}.

Let $\{p_1, \ldots, p_l\}$ be a maximal collection of non-$G$-equivalent branch points of $\pi.$ The {\it signature} of the action of $G$ on $S$ is the tuple $(\gamma; m_1, \ldots, m_l)$ where $\gamma$ is the genus of the quotient $S/G$ and $m_i=|G_{p_i}|.$ If $\gamma=0$ we write $(m_1, \ldots, m_r)$ for short. The branch value $\pi(p_i)$ is said to be {\it marked} with $m_i.$ The Riemann-Hurwitz formula relates these numbers, the order of $G$ and the genus $g$ of $S;$ namely \begin{equation*} \label{R-R}
2g-2=|G|(2 \gamma -2) + |G| \cdot \Sigma_{i=1}^l (1- \tfrac{1}{m_i}).
\end{equation*}

A $2\gamma+l$ tuple $(a_1, \ldots, a_{\gamma}, b_1, \ldots,
b_{\gamma}, c_1, \ldots,c_l)$ of elements of $G$ is called a
\textit{generating vector of $G$ of type $(\gamma; m_1, \ldots ,m_l)$} if the
following conditions are satisfied:

\begin{enumerate}
\item[(a)] $G$ is generated by the elements $a_1, \ldots, a_{\gamma}, b_1,
\ldots, b_{\gamma}, c_1, \ldots, c_l,$
\item[(b)] $\text{order}(c_i)=m_i$ for $1 \le i \le l,$ and
\item[(c)] $\Pi_{j=1}^{\gamma}[a_j, b_j] \Pi_{i=1}^l c_i=1$, where $[a_i,b_i]=a_ib_ia_i^{-1}b_i^{-1}.$
\end{enumerate}

Riemann's existence theorem ensures that the group $G$ acts on a Riemann surface of genus $g$ with signature $(\gamma; m_1, \ldots, m_l)$ if and only if the Riemann-Hurwitz formula is satisfied and $G$ has a generating vector of type $(\gamma; m_1, \ldots ,m_l).$ See \cite{Brou}.

If we denote by $C_j$ the conjugacy class of the subgroup $G_{p_j}$ in $G$ then, the tuple $(\gamma; [m_1, C_1], \ldots, [m_l, C_l])$ is called a {\it geometric signature} for the action of $G$ on $S$. This concept was introduced in \cite{yoibero} in order to control the behavior of the intermediate coverings ($S \to S/H$ for a subgroup $H$ of $G$) and the dimension of the factors arising in the group algebra decomposition of $JS$ (see Subsection \ref{GAD}). 

We shall say that the geometric signature $(\gamma; [m_1, C_1], \ldots, [m_l, C_l])$ is {\it associated} to the generating vector $(a_1, \ldots, a_{\gamma}, b_1, \ldots, b_{\gamma}, c_1, \ldots,c_l)$ because the subgroup of $G$ generated by $c_i$ is in the conjugacy class $C_i.$ 

\subsection{Topologically equivalent actions} Let $\text{Hom}^+(S)$ denote the group of orientation preserving homeomorphisms of $S.$ Two actions $\psi_1$ and $\psi_2$ of $G$ on $S$ are {\it topologically equivalent} if there exist $\omega \in \Aut(G)$ and $h \in \text{Hom}^+(S)$ such that
\begin{equation}\label{equivalentactions}
\psi_2(g) = h \psi_1(\omega(g)) h^{-1} \hspace{0.5 cm} \mbox{for all} \,\, g\in G.
\end{equation}

In terms of Fuchsian groups, the action of $G$ on $S$ can be constructed by means of a pair of Fuchsian groups $\Gamma \trianglelefteq  \Delta$ such that $S = \mathbb{H}/\Gamma$, with $\mathbb{H}$ denoting the upper half-plane, and an epimorphism $\theta: \Delta \to G$ with kernel $\Gamma$. The group $\Gamma$ is torsion-free and isomorphic to the fundamental group of $S.$ It is also known that $\Delta$ has a presentation given by generators $\alpha_1, \ldots, \alpha_{\gamma}, \beta_1, \ldots, \beta_{\gamma}, \gamma_1, \ldots , \gamma_l$ and relations
$$\gamma_1^{m_1}=\cdots =\gamma_l^{m_l}=\Pi_{j=1}^{\gamma}[\alpha_j, \beta_j] \Pi_{i=1}^l \gamma_i=1.$$

Note that there is a bijective correspondence between the set of generating vectors of $G$ of type $(\gamma, m_1, \ldots, m_l)$ and the set $\mathscr{K}$ of epimorphism of groups $\Delta \to G$ with torsion-free kernel.

Each orientation preserving homeomorphism $h$ satisfying (\ref{equivalentactions}) yields a group automorphism $h^*$ of $\Delta$; we denote by $\mathscr{B}$ the subgroup of $\mbox{Aut}(\Delta)$ consisting of them. The group $\mbox{Aut}(G) \times \mathscr{B}$ acts on $\mathscr{K}$ by $$((\omega, h^*), \theta) \mapsto \omega \circ \theta \circ (h^*)^{-1}$$ and therefore it also acts on the set of generating vectors of $G$ of type $(\gamma, m_1, \ldots, m_l).$

Let $\sigma_1$ and $\sigma_2$ be two generating vectors of type $(\gamma, m_1, \ldots, m_l)$ of $G.$ Then $\sigma_1$ and $\sigma_2$ define topologically equivalent actions if and only if $\sigma_1$ and $\sigma_2$ are in the same $(\mbox{Aut}(G) \times \mathscr{B})$-orbit (see \cite{Brou}; also \cite{Harvey} and \cite{McB}). 

\s

We refer to the classical articles \cite{singerman2} and \cite{singerman} for more details concerning the relationship between Riemann surfaces, generating vectors and Fuchsian groups.

\subsection{Abelian varieties} A $g$-dimensional {\it complex torus} $X=V/\Lambda$ is the quotient of a $g$-dimensional complex vector space $V$ by a lattice $\Lambda.$ Each complex torus is an abelian group and a $g$-dimensional compact connected complex analytic manifold.  

Complex tori can be described in a very concrete way, as follows. Choose bases \begin{equation} \label{bases}B_V=\{v_i\}_{i=1}^{g} \hspace{0.5 cm} \mbox{and} \hspace{0.5 cm} B_{\Lambda}=\{\lambda_j\}_{j=1}^{2g}\end{equation}of $V$ as a $\mathbb{C}$-vector space, and of $\Lambda$ as a $\mathbb{Z}$-module, respectively. Then there are complex constants $\{\pi_{ij}\}$ such that $\lambda_j=\Sigma_{i=1}^g \pi_{ij} v_i.$ The matrix $$\Pi=(\pi_{ij}) \in \mbox{M}(g \times 2g, \mathbb{C})$$represents $X,$ and is known as the {\it period matrix} for $X$ with respect to \eqref{bases}.

A {\it homomorphism} between  complex tori is a holomorphic map which is also a homomorphism of groups. We shall denote by $\mbox{End}(X)$ the ring of endomorphism of $X.$ An {\it automorphism} of a complex torus is a bijective homomorphism into itself.

Special homomorphisms are {\it isogenies:} these are surjective homomorphisms with finite kernel; isogenous tori are denoted by $X_1 \sim X_2$. The isogenies of a complex torus $X$ into itself are the invertible elements of the ring of rational endomorphisms  $$\mbox{End}_{\mathbb{Q}}(X):=\mbox{End(X)} \otimes_{\mathbb{Z}} \mathbb{Q}.$$

An {\it abelian variety} is a complex torus which is also a projective algebraic variety. Each abelian variety $X=V/\Lambda$ admits a {\it polarization}; namely, a non-degenerate real alternating form $\Theta$ on $V$ such that for all $v,w \in V$ $$\Theta(iv, iw)=\Theta(v,w) \hspace{0.5 cm} \mbox{and} \hspace{0.5 cm} \Theta(\Lambda \times \Lambda) \subset \mathbb{Z}.$$

If the elementary divisors of $\Theta|_{\Lambda \times \Lambda}$ are $\{1, \stackrel{g}{\ldots}, 1\},$ where $g$ is the dimension of $X,$ then
the polarization $\Theta$ is called {\it principal} and the pair $(X,\Theta)$ is called a 
{\it principally polarized abelian variety} (from now on we write {\it ppav} for short).

Let $(X=V/\Lambda, \Theta)$ be a ppav of dimension $g.$ Then there exists a basis for $\Lambda$ such that the matrix for $\Theta$ with respect to it is \begin{equation*}\label{simpl}
J = \left( \begin{smallmatrix}
0 & I_g \\
-I_g & 0
\end{smallmatrix} \right) 
\end{equation*}with $I_g$ denoting the $g \times g$ identity matrix; such a basis is termed {\it symplectic.} Furthermore, there exist a basis for $V$ and a symplectic basis for $\Lambda$ which respect to which the period matrix for $X$ is $\Pi=(I_g \, Z),$ where $Z$ belongs to the Siegel space $$\mathscr{H}_g=\{ Z \in \mbox{M}(g, \mathbb{C}) : Z = Z^t \, \mbox{and } \mbox{Im}(Z) >0\},$$with $Z^t$ denoting the transpose matrix of $Z.$

\subsection{Moduli space of ppavs} 

An {\it isomorphism} between ppavs is an isomorphism of the underlying complex torus structures that preserves the polarizations.

Let $(X_i, \Theta_i)$ be a ppav of dimension $g,$ and let $\Pi_i=(I_g \, Z_i)$ be the period matrix of $X_i$ with respect to chosen basis, for $i=1,2$. Each isomorphism between $(X_1, \Theta_1)$ and $(X_2, \Theta_2)$ is given by a pair of matrices $$M \in \mbox{GL}(g, \mathbb{C}) \hspace{0,4 cm} \mbox{and} \hspace{0,4 cm} R \in \mbox{GL}(2g, \mathbb{Z})$$ (corresponding to the {\it analytic} and {\it rational} representations, respectively) such that\begin{equation} \label{ig} M(I_g \, Z_1)=(I_g \, Z_2)R.\end{equation}

Since $R$ preserves the principal polarizations, it belongs to the symplectic group $$\mbox{Sp}(2g, \mathbb{Z})=\{ R \in  \mbox{M}(2g, \mathbb{Z}) : R^tJ R=J  \}.$$ 

Now, it follows from \eqref{ig} that the correspondence
\begin{equation*} \label{cuadrada} \mbox{Sp}(2g, \mathbb{Z}) \times \mathscr{H}_g \to \mathscr{H}_g \hspace{0.8 cm}    ( \left( \begin{smallmatrix}
A & B \\
C & D
\end{smallmatrix} \right) , Z ) \mapsto (A+ZC)^{-1}(B+ZD) 
\end{equation*}defines an action which identifies period matrices representing isomorphic ppavs. Hence, the quotient $$\mathscr{H}_g \to \mathcal{A}_g:=\mathscr{H}_g/ \mbox{Sp}(2g, \mathbb{Z})$$is the {\it moduli space} of isomorphism classes of ppavs of dimension $g.$

We refer to \cite{bl} and \cite{debarre} for more details on abelian varieties.

\subsection{Representations of groups} \label{rep} Let $G$ be a finite group and let $\rho : G \to \mbox{GL}(V)$ be a complex representation of $G.$ Abusing notation, we shall also write $V$ to refer to the representation $\rho.$ The {\it degree} $d_V$ of $V$ is the dimension of $V$ as complex vector space, and the character $\chi_V$ of $V$ is the map obtained by associating to each $g \in G$ the trace of the matrix $\rho(g).$ Two representations $V_1$ and $V_2$ are {\it equivalent} if and only if their characters agree; we write $V_1 \cong V_2.$ The {\it character field} $K_V$ of $V$ is the field obtained by extending the rational numbers by the values of the character $\chi_{V}$. The {\it Schur index} $s_V$ of $V$ is the smallest positive integer such that there exists a field extension $L_V$ of $K_V$ of degree $s_V$ over which $V$ can be defined. 

It is a known fact that for each rational irreducible representation $W$ of $G$ there is a complex irreducible representation $V$ of $G$ such that $$W \otimes_{\mathbb{Q}}\mathbb{C}  \cong   (\oplus_{\sigma} V^{\sigma}) \oplus \stackrel{s_V}{\cdots}\oplus (\oplus_{\sigma} V^{\sigma}) = s_V\, \left( \oplus_{\sigma} V^{\sigma} \right),$$where the sum $\oplus_{\sigma}$ is taken over the Galois group associated to the extension $\mathbb{Q} \le K_V.$ We say that $V$ is {\it associated} to $W.$ 

We refer to \cite{Serre} for further basic facts related to representations of groups.

\subsection{Group algebra decomposition theorem for Jacobians}\label{GAD}Let $S$ be a  Riemann surface of genus $g.$ Let us denote by $\mathscr{H}^{1,0}(S, \mathbb{C})$ the $g$-dimensional complex vector space of the holomorphic $1$-forms on $S$, and by $H_{1}(S, \mathbb{Z})$ the first homology group of $S.$ The {\it Jacobian variety} of $S$ is the ppav of dimension $g$ defined as$$JS=(\mathscr{H}^{1,0}(S, \mathbb{C}))^*/H_{1}(S, \mathbb{Z}),$$endowed with the principal polarization given by the geometric intersection number. 

The relevance of the Jacobian variety lies in the well-known Torelli's theorem, which asserts that two compact Riemann surfaces are isomorphic if and only if their Jacobians are isomorphic as ppavs.

Let $G$ be a finite group and let $W_1, \ldots, W_r$ be its rational irreducible representations. It is known that each action of $G$ on $S$ induces a $\mathbb{Q}$-algebra homomorphism  $\Phi : \mathbb{Q} [G]\to \text{End}_{\mathbb{Q}}(JS).$ Each $\alpha \in {\mathbb Q}[G]$ defines an abelian subvariety of $JS;$ namely, \begin{equation*} A_{\alpha} := {\rm Im} (\alpha)=\Phi (l\alpha)(JS) \subset JS,\end{equation*}where $l$ is some positive integer chosen such that $l\alpha \in \mbox{End}(JS)$.

The decomposition  $1 = e_1 + \cdots + e_r\in \mathbb{Q}[G]$, where $e_i$ is a central idempotent (uniquely determined and canonically computed from $W_i$) yields an isogeny $$JS \sim A_{e_1} \times \cdots \times A_{e_r}$$which is $G$-equivariant; this is called the {\it isotypical decomposition} of $JS.$ See \cite{l-r}.

Additionally, there are idempotents $f_{i1},\dots, f_{in_i} $ such that $e_i=f_{i1}+\dots +f_{in_i}$ where $n_i=d_{V_i}/s_{V_i},$ with $V_i$ being a complex irreducible representation associated to $W_i.$ These idempotents provide $n_i$ subvarieties of $JS$ which are isogenous to each other; let $B_i$ be one of them, for each $i$. Then
\begin{equation} \label{lg}
JS \sim_G B_{1}^{n_1} \times \cdots \times B_{r}^{n_r}
\end{equation}
which is called the {\it group algebra decomposition} of $JS$ with respect to $G.$ See \cite{cr}. 

If $W_1$ denotes the trivial representation, then $n_1=1$ and $B_{1} \sim J(S/G)$.

\s

Let $H$ be a subgroup of $G.$  It was proved in \cite{cr}  that the group algebra decomposition (\ref{lg}) of $JS$ with respect to $G$ yields the following isogeny decomposition:
\begin{equation} \label{lg2}
J(S/H) \sim B_{1}^{n_1^H} \times \cdots \times B_{r}^{n_r^H} \hspace{0.5 cm} \mbox{where} \,\,\,\, n_i^H=d_{V_i}^H/s_{V_i}
\end{equation}with $d_{V_i}^H$ denoting the dimension of the vector subspace $V_i^H$ of $V_i$ consisting of those elements which are fixed under $H.$

The previous result provides a criterion to identify if a factor in (\ref{lg}) is isogenous to the Jacobian of a quotient of $S$ (c.f. \cite{yo}). Namely, if a subgroup $N$ of $G$ satisfies $d_{V_i}^{N}=s_{V_i}$ for some fixed $2 \le i \le r$ and $d_{V_l}^{N} = 0$ for all $l \neq i$  
such that $B_{l} \neq 0,$ then  \begin{equation} \label{lg3}
B_{i} \sim J(S/N).
\end{equation}

Let us now suppose that $\tau =(\gamma; [m_1,C_1], \ldots, [m_l, C_l])$ is the geometric signature of the action of $G$ on $S.$ Let $G_k$ be a representative of the conjugacy class $C_k$ for $1 \le k \le l.$ In \cite{yoibero} it was proved that the dimension of the factor $B_{i}$ in (\ref{lg}) is
\begin{equation}\label{dimensiones}
\dim (B_{i})=k_i(d_{V_i}(\gamma -1)+\tfrac{1}{2} \Sigma_{k=1}^l (d_{V_i}-d_{V_i}^{G_k} ))
\end{equation}
where $k_i$ is the degree of the extension $\mathbb{Q} \le L_{V_i}.$ To avoid confusion, we shall write $\mbox{dim}_{\tau}(B_i)$ instead of $\mbox{dim}(B_i)$ to refer to the dependence on $\tau.$

For decompositions of Jacobians with respect to special groups, see, for example, the articles  \cite{d1}, \cite{CHQ}, \cite{nos}, \cite{IJR}, \cite{KR},   \cite{PA}, \cite{rubiyo}, \cite{rubiyo2} and \cite{d4}.

\section{Algebraic description of $\mathcal{F}_q$} \label{s3}
Let $q \ge 5$ be a prime number. Let $S$ denote a Riemann surface in the family $\mathcal{F}_q$ and 
let $$G=\langle r,s : r^{2q}=s^2=(sr)^2=1\rangle \cong \mathbf{D}_{2q}$$denote its full automorphism group. We recall that the quotient Riemann surface $\mathcal{}S/G$ has genus zero, and that the associated $4q$-fold branched regular covering map$$\pi_G: S \to S/G \cong \mathbb{P}^1$$ramifies over four values; three ramification values marked with 2 and one ramification value marked with $2q.$ We can assume the action to be represented by the generating vector $(s, sr^{-2}, r^q, r^{q+2}).$ In addition, up to a M\"{o}bius transformation, we can assume the branch values to be $\infty, 0, 1$ marked with 2 and $\lambda \in \mathbb{C}-\{0,1\}$ marked with $2q.$

As we will discuss later (see Remark \ref{conocidos}), $\lambda$ must be different from the exceptional values $-1, \tfrac{1}{2}, 2, \gamma, \gamma^2$ where $\gamma^3=-1.$ If we denote by $$\Omega:=\mathbb{C}-\{0, \pm 1, \tfrac{1}{2}, 2, \gamma, \gamma^2 \}$$the set of {\it admissible} parameters, then the family $\mathcal{F}_q$ can be understood by means of an everywhere maximal rank holomorphic map \begin{equation*} \label{azul} h : \mathcal{F}_q \to \Omega \end{equation*}in such a way that the fibers of $h$ agree with the Riemann surfaces in $\mathcal{F}_q.$ See, for example, Section 6.2 in \cite{Hubbard}.

\s

We denote by $S_{\lambda}$ the Riemann surface $h^{-1}(\lambda)$ and by $G_{\lambda} \cong G$ its full automorphism group.

\begin{theo} \label{modelo}
Let $\lambda \in \Omega.$  Then $S_{\lambda}$ is isomorphic to the Riemann surface defined by the normalization of the hyperelliptic algebraic curve$$y^2=x(x^{2q}+2 \tfrac{1+\lambda}{1-\lambda } x^{q} + 1).$$
\end{theo}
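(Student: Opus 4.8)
The plan is to realize $S_\lambda$ explicitly as a branched cover and then identify the cover with the given hyperelliptic curve. Since the full automorphism group $G \cong \mathbf{D}_{2q}$ acts with signature $(0;2,2,2,2q)$ and quotient $\mathbb{P}^1$, I would first pass to the cyclic subgroup $\langle r\rangle\cong \mathbb{Z}_{2q}$ of index $2$. The intermediate quotient map $S_\lambda \to S_\lambda/\langle r\rangle \cong \mathbb{P}^1$ is a cyclic covering of degree $2q$, and by the Riemann--Hurwitz formula applied to the restriction of the generating vector $(s,sr^{-2},r^q,r^{q+2})$, I can read off exactly over which of the four marked values the map $\pi_{\langle r\rangle}$ ramifies and with what local monodromy. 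The element $r^q$ (of order $2$) and $r^{q+2}$ (of order $2q$) are the two rotations in the vector, so the cyclic cover $S_\lambda \to \mathbb{P}^1$ is totally ramified over $\lambda$ and has controlled ramification over the point carrying $r^q$; this pins down a Fermat-type equation $w^{2q} = \prod (x-a_i)^{e_i}$ for the cyclic part.

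Next I would descend one more step. The surfaces in $\mathcal{F}_q$ are hyperelliptic (this is Remark 9 of \cite{costa}), so there is a central involution $\iota$ whose quotient $S_\lambda/\langle\iota\rangle$ is $\mathbb{P}^1$ and the hyperelliptic map has $2g+2 = 2q+2$ branch points. The natural candidate for $\iota$ is the unique central involution of $\mathbf{D}_{2q}$, namely $r^q$. The strategy is therefore to write $S_\lambda$ directly as a double cover $y^2 = f(x)$ where $x$ is a coordinate on $\mathbb{P}^1 = S_\lambda/\langle r^q\rangle$ and $f$ has degree $2q+1$ (so that $\infty$ is also a Weierstrass point, matching the odd-degree normalization $y^2 = x(x^{2q}+\cdots)$). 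The Weierstrass points are the $2q+2$ fixed points of $r^q$, and I would use the action of the quotient group $G/\langle r^q\rangle \cong \mathbf{D}_q \cong$ a dihedral group of order $2q$ acting on $\mathbb{P}^1 = S_\lambda/\langle r^q\rangle$ to constrain $f$. The residual $\mathbb{Z}_q$-action forces the branch locus to be invariant under $x \mapsto \zeta_q x$ (after normalizing), which already forces $f(x)$ to be a polynomial in $x$ and $x^q$, explaining the shape $x(x^{2q} + c\,x^q + 1)$.

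The remaining task is to compute the single free coefficient $c = 2\frac{1+\lambda}{1-\lambda}$ in terms of the cross-ratio $\lambda$ of the four branch values $\infty, 0, 1, \lambda$ of $\pi_G$. Here I would track how the four $G$-branch values sit inside the configuration of Weierstrass points. Under $\pi_G$ the $2q+2$ Weierstrass points fall into $G$-orbits, and the extra reflection $s$ (together with the outer dihedral symmetry $x\mapsto 1/x$ coming from $sr^{\text{odd}}$) acts on the $x$-line $S_\lambda/\langle r^q\rangle$; the self-inverse symmetry $x \mapsto 1/x$ explains why the constant and leading coefficients of $f$ coincide (both equal to $1$), while the value of the branch point marked $2q$ relative to those marked $2$ determines $c$. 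Concretely, I would express the three $G$-branch values marked with $2$ and the one marked with $2q$ as functions on the $x$-coordinate, impose the normalization sending three of them to $\infty,0,1$, and solve for the position of $\lambda$; matching this with the symmetric configuration of roots of $x^{2q}+cx^q+1$ yields the stated relation.

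The main obstacle I anticipate is the bookkeeping in the last paragraph: correctly identifying which symmetry of the dihedral action realizes the hyperelliptic involution versus the involution $x \mapsto 1/x$ on the base, and then carefully normalizing the M\"obius coordinate so that the four $G$-branch values land at $\infty,0,1,\lambda$ while simultaneously presenting $f$ in the symmetric form $x(x^{2q}+cx^q+1)$. The algebraic form of the curve is forced by symmetry up to this one coefficient, so the entire content of the theorem is the precise value $c = 2\frac{1+\lambda}{1-\lambda}$; verifying that this cross-ratio computation is consistent (and that the excluded exceptional values of $\lambda$ are exactly those making $c \in \{\pm 2\}$ or producing extra automorphisms) is where I expect the real care to be required.
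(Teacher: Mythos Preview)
Your core strategy from the second paragraph onward is exactly the paper's: identify $r^q$ as the hyperelliptic involution, pass to the residual $\mathbf{D}_q = G/\langle r^q\rangle$ action on the base $S_\lambda/\langle r^q\rangle\cong\mathbb{P}^1$ (realized there as $z\mapsto \omega_q z$ and $z\mapsto 1/z$), and use that symmetry to constrain the Weierstrass locus to the form $x(x^{2q}+cx^q+1)$. Your first paragraph's detour through the full cyclic quotient $S_\lambda/\langle r\rangle$ is not used in the paper and is unnecessary.

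Where you leave the determination of $c$ as an anticipated ``bookkeeping obstacle,'' the paper resolves it by simply writing down the $\mathbf{D}_q$-quotient map explicitly: after a careful fibre-by-fibre analysis of how the four $G$-branch values $\infty,0,1,\lambda$ behave under $\pi_1:S_\lambda\to S_\lambda/\langle r^q\rangle$ and $\pi_2:S_\lambda/\langle r^q\rangle\to\mathbb{P}^1$, the paper exhibits
\[
\Pi(z)=\lambda\cdot\frac{z^{2q}-2z^q+1}{z^{2q}+2z^q+1}
\]
as the unique $\langle z\mapsto\omega_q z,\,z\mapsto 1/z\rangle$-invariant map with the required ramification over $\infty,0,\lambda$, and then observes that the $2q$ Weierstrass points other than $0,\infty$ are precisely $\Pi^{-1}(1)$, i.e.\ the roots of $z^{2q}+2\tfrac{1+\lambda}{1-\lambda}z^q+1$. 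This explicit map is the cleanest way through the step you flagged; your abstract cross-ratio argument would ultimately have to produce the same $\Pi$.
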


\begin{proof} Following \cite[Remark 9]{costa}, the Riemann surface $S_{\lambda}$ is hyperelliptic; the hyperelliptic involution being represented by $r^q.$ In other words, the Riemann surface $R_{\lambda}:=S_{\lambda}/\langle r^q \rangle$ has genus zero, and the associated two-fold branched regular covering map $$\pi_1: S_{\lambda} \to R_{\lambda}$$ramifies over $2q+2$ values; let us denote these values by $\alpha_1, \ldots, \alpha_{2q+2}.$ Let $$\pi_2: R_{\lambda} \to R_{\lambda}/K\cong S_{\lambda}/G \cong \mathbb{P}^1$$denote the  $2q$-fold branched regular covering map associated to the action of the quotient group $K=G/\langle r^q \rangle \cong \mathbf{D}_{q}$ on $R_{\lambda}.$ The following diagram commutes.  $$
\begin{tikzpicture}[node distance=1 cm, auto]
  \node (P) {$S_{\lambda}$};
  \node (A) [below of=P, left of=P] {$\mathbb{P}^1$};
  \node (C) [below of=B, right of=P] {$R_{\lambda}$};
  \draw[->] (P) to node [swap] {$\pi_G$} (A);
  \draw[->] (P) to node {$\pi_1$} (C);
  \draw[->] (C) to node [swap] {$\pi_2$} (A);
\end{tikzpicture}
$$

\s
{\bf Claim.} The ramification values of $\pi_2$ are: $\infty$ marked with two, $0$ marked with two, and $\lambda$ marked with $q.$ Moreover, among the ramification values of $\pi_1$ only two of them are ramification points of $\pi_2$, these points lying over $\lambda$ by $\pi_2.$

\s

We proceed to study carefully the ramification data associated to the coverings in the previous commutative diagram. To do that, we follow \cite[Section 3.1]{yoibero}.

\s
\begin{enumerate}
\item[(a)] The fiber over $\infty$ by $\pi_G$ consists of $2q$ different points, say $\beta'_1, \ldots, \beta'_{2q}.$ The stabilizer subgroup of $\beta'_j$ in $G$ is of the form $\langle sr^{-2t} \rangle$ for a suitable choice of $t.$ Now, as $|\langle sr^{-2t}  \rangle \cap \langle r^q \rangle|=1$ for each choice of $t$, it follows that $\beta'_j$ is not a branch point of $\pi_1.$ Thus, over $\infty$ by $\pi_2$ there are exactly $q$ different points, say $$\{\beta_1, \ldots, \beta_q\}=\pi_1(\{\beta'_1, \ldots, \beta'_{2q}\}),$$showing that $\infty$ is a ramification value of $\pi_2$ marked with two.
\s

\item[(b)] The fiber over $0$ by $\pi_G$ consists of $2q$ different points, say $\gamma'_1, \ldots, \gamma'_{2q}.$  As argued in (a), $\gamma'_j$ is not a branch point of $\pi_1$  and over $0$ by $\pi_2$ there are exactly $q$ different points, say $$\{\gamma_1, \ldots, \gamma_q\}=\pi_1(\{\gamma'_1, \ldots, \gamma'_{2q}\}),$$showing that $0$ is a ramification value of $\pi_2$ marked with two.

\s

\item[(c)] The fiber over $1$ by $\pi_G$ consists of $2q$ different points, say $\alpha'_1, \ldots, \alpha'_{2q}.$ The  stabilizer subgroup of $\alpha'_j$ in $G$ is $\langle r^q \rangle$ and therefore $\alpha'_j$ is a branch point marked with two of $\pi_1$ for each $j.$ Thus, over $1$ by $\pi_2$ there are exactly $2q$ different points, say $$\alpha_j=\pi_1(\alpha'_j), \hspace{0,3 cm} \mbox{for } j \in \{1, \ldots, 2q\}$$showing that $1$ is not ramification value of $\pi_2.$

\s

\item[(d)] The fiber over $\lambda$ by $\pi_G$ consists of 2 different points, say ${\alpha}'_{2q+1}$ and ${\alpha}'_{2q+2}.$ The stabilizer subgroup of ${\alpha}'_{2q+1}$ and ${\alpha}'_{2q+2}$ in $G$ is $\langle r \rangle.$ Now, as $|\langle r  \rangle \cap \langle r^q \rangle|=2,$ it follows that ${\alpha}'_{2q+1}$ and ${\alpha}'_{2q+2}$ are branch points of $\pi_1$ marked with $2$. Thus, over $\lambda$ by $\pi_2$ there are exactly two different points: $$\alpha_{2q+1} =\pi_1({\alpha}'_{2q+1}) \hspace{0.3 cm} \mbox{and}\hspace{0.3 cm} \alpha_{2q+2} =\pi_1({\alpha}'_{2q+2}),$$showing that $\lambda$ is a ramification value of $\pi_2$ marked with $q$.     
\end{enumerate}

The proof of the claim is done.
\s

Then, without loss of generality, we can suppose $K$ to be generated by  $$a(z)=\omega_{2q}^2z \hspace{0,3 cm} \mbox{and} \hspace{0,3 cm} b(z)=\tfrac{1}{z}$$where $\omega_t=\mbox{exp}(\tfrac{2 \pi i}{t}),$ and that:
\begin{enumerate}
\item[(a)] the $q$ branch points of $\pi_2$ over $\infty$ are $\beta_j=\omega_{2q}^{2j-1}$ for $1 \le j \le q.$
\vspace{0,1 cm}
\item[(b)] the $q$ branch points of $\pi_2$ over $0$ are $\gamma_j=\omega_{2q}^{2j}$ for $0 \le j < q$.
\vspace{0,2 cm}
\item[(c)] the two branch points of $\pi_2$ over $\lambda$ are $\alpha_{2q+1}=0$ and $\alpha_{2q+2}=\infty.$
\end{enumerate}
\vspace{0,1 cm}

It follows that $S_{\lambda}$ is isomorphic to the Riemann surface defined by the normalization of the hyperelliptic algebraic curve$$y^2=x(x-\alpha_1) \cdots (x-\alpha_{2q}),$$and it only remains to prove that $\alpha_1, \ldots, \alpha_{2q}$ are the $2q$ different solutions of the polynomial equation $$z^{2q}+ 2\tfrac{1+\lambda}{1-\lambda } z^{q} + 1=0.$$

Now, by virtue of the claim, to accomplish this task we need to exhibit a $2q$-fold branched regular covering map  $\Pi : \mathbb{P}^1 \to \mathbb{P}^1$ admitting $\langle a, b \rangle \cong \mathbf{D}_q$ as its deck group, such that
$\Pi(\infty)=\Pi(0)=\lambda$ and \begin{displaymath}
\Pi(\omega_{2q}^k)= \left\{ \begin{array}{ll}
\infty & \mbox{if } k \mbox{ is odd; } \\
0 & \mbox{if } k \mbox{ is even.}
\end{array} \right. \end{displaymath}

It is straightforward  to check that $\Pi(z) = \lambda \cdot \tfrac{z^{2q}-2z^q+1}{z^{2q}+2z^q+1}$ is the desired map, and the proof follows directly after noticing that $$\{\alpha_1, \ldots, \alpha_{2q}\}=\Pi^{-1}(1).$$\end{proof}

\begin{rema} As we shall see later (see Theorem \ref{lreal}) among the members of the family $\mathcal{F}_q$ there are some of them which admit anticonformal involutions. In this case, the previous result can also be obtained as a consequence of the results of \cite{gamboa}.
\end{rema}

\begin{theo} 
Let $\lambda \in \Omega.$ In the algebraic model of Theorem \ref{modelo} the full automorphism group of $S_{\lambda}$ is generated by the transformations $$r(x,y)=(\omega_q x, \omega_{2q} y) \hspace{0,3 cm} \mbox{and} \hspace{0,3 cm} s(x,y)=(\tfrac{1}{x}, \tfrac{y}{x^{q+1}})$$where $\omega_t=\mbox{exp}(\tfrac{2 \pi i}{t}).$  
\end{theo}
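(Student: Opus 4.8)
The plan is to use the fact, already established in the setup, that for admissible $\lambda$ the full automorphism group $\Aut(S_\lambda)=G_\lambda$ is isomorphic to $\mathbf{D}_{2q}$ and hence has order exactly $4q$. Consequently it suffices to verify that the two given transformations $r$ and $s$ are automorphisms of the curve and that the subgroup $\langle r,s\rangle$ they generate has order $4q$; equality with the whole group then follows for cardinality reasons. I would organize the argument in three short steps: checking that $r$ and $s$ preserve the defining equation, checking the dihedral relations, and computing the order of $\langle r,s\rangle$.

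First I would verify that $r$ and $s$ are automorphisms. Writing $c=2\tfrac{1+\lambda}{1-\lambda}$ and $f(x)=x(x^{2q}+cx^q+1)$, the key identities are $\omega_q^q=1$ and $\omega_{2q}^2=\omega_q$. For $r$ one computes $(\omega_{2q}y)^2=\omega_q y^2$ while $f(\omega_q x)=\omega_q f(x)$, so the relation $y^2=f(x)$ is preserved. For $s$ one clears denominators: $\left(\tfrac{y}{x^{q+1}}\right)^2=\tfrac{y^2}{x^{2q+2}}$, and a direct expansion shows this equals $\tfrac{1}{x}\bigl((\tfrac{1}{x})^{2q}+c(\tfrac{1}{x})^q+1\bigr)=f(1/x)$ once $y^2=f(x)$ is used. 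Since both maps are given by rational expressions preserving the affine equation, they extend to biholomorphisms of the smooth projective model, i.e. of the normalization $S_\lambda$; in particular there is nothing delicate to check at the Weierstrass points or at the points over $x=0,\infty$.

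Next I would check the relations $r^{2q}=s^2=(sr)^2=1$. From $r^k(x,y)=(\omega_q^k x,\omega_{2q}^k y)$ one reads off that $r$ has order exactly $2q$, and a short substitution gives $s^2=\mathrm{id}$ and $(sr)^2=\mathrm{id}$, the latter being equivalent to $srs=r^{-1}$. Thus $\langle r,s\rangle$ is a homomorphic image of $\mathbf{D}_{2q}$. To see that it has the full order $4q$ I would observe that $s$, which sends $x\mapsto 1/x$, cannot lie in the cyclic group $\langle r\rangle$, all of whose elements act on the first coordinate by a scalar $\omega_q^k$; hence $[\langle r,s\rangle:\langle r\rangle]=2$ and $|\langle r,s\rangle|=4q$. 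As a consistency check with Theorem \ref{modelo}, note that $r^q(x,y)=(x,\omega_{2q}^q y)=(x,-y)$ is precisely the hyperelliptic involution.

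Since $\langle r,s\rangle$ is a subgroup of $\Aut(S_\lambda)$ of order $4q$ and $|\Aut(S_\lambda)|=4q$, the two coincide, which proves the statement. I expect every step to be essentially a routine computation; the only points requiring any care are the clean bookkeeping of the roots of unity (via $\omega_{2q}^2=\omega_q$ and $\omega_q^q=1$) and the justification that the affine formulas define automorphisms of the full Riemann surface rather than merely birational maps — the latter being automatic here because the normalization of a plane model carries a unique smooth projective structure on which birational self-maps act as automorphisms.
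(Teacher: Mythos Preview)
Your proof is correct and follows essentially the same approach as the paper: verify that $r$ and $s$ preserve the defining equation, check the dihedral relations to conclude $\langle r,s\rangle\cong\mathbf{D}_{2q}$ has order $4q$, and invoke $|\Aut(S_\lambda)|=4q$ to get equality. The only difference is that the paper, after reaching this point, additionally records the stabilizer structure of the branch points (identifying them with conjugates of $\langle s\rangle$, $\langle sr^{-2}\rangle$, $\langle r^q\rangle$, $\langle r^{q+2}\rangle$), which is not needed for the bare statement but confirms that the explicit $r,s$ match the abstract generators used elsewhere in the paper.
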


\begin{proof} The fact that the transformations $r$ and $s$ are indeed automorphisms of $S_{\lambda}$ follows from an easy computation. Note that $s$ has order two, $r$ has order $2q$ and $$sr(x,y)=(\tfrac{1}{\omega_q                                 x}, \tfrac{y}{\omega_{2q}x^{q+1}})$$has order two; thus, $r$ and $s$ generate a group of order $4q$ isomorphic to $\mathbf{D}_{2q}.$

The proof of the theorem follows after noticing that the stabilizer subgroup of each ramification point of the regular covering map associated to the action of $\langle r, s \rangle$ is conjugate to the group generated by either $s, sr^{-2}, r^q$ or $r^{q+2}.$  In fact:

\begin{enumerate}
\item[(a)] each power of $r^{q+2}$ has two fixed points with stabilizer subgroup $\langle r \rangle,$ 
\item[(b)] $r^q$ has $2q$ fixed points with stabilizer subgroup $\langle r^q \rangle,$ 
\item[(c)] if $t$ is odd then the involution $sr^t$ does not have fixed points, and
\item[(d)] if $t$ is even then the involution $sr^t$ has four fixed points; the $G$-orbit of each of them has cardinality $2q,$ and the stabilizer subgroup of each point in this orbit is conjugate to $\langle s \rangle.$
\end{enumerate}
The proof is done.
\end{proof}

We anticipate the fact that the Jacobian variety $JS_{\lambda}$ decomposes, up to isogeny, as a product of an elliptic curve $E_{\lambda}$ and (two copies of) the Jacobian of a Riemann surface of genus $\tfrac{q-1}{2}$. 

The next proposition describes algebraically the elliptic curve $E_{\lambda}.$ 

\begin{prop}\label{eli} Let $\lambda \in \Omega,$ and consider the following subgroup of $G$ $$H_4=\langle r^{-2}, sr^{-1}\rangle \cong \mathbf{D}_q.$$  Then the quotient Riemann surface $E_{\lambda}$ given by the action of $H_4$ on $S_{\lambda}$ has genus one, and it is endowed with a two-fold regular covering map over the projective line which ramifies over $\infty, 0, 1$ and $\lambda.$ In particular, $E_{\lambda}$ is isomorphic to the Riemann surface defined by the elliptic curve $$y^2=x(x-1)(x-\lambda).$$
\end{prop}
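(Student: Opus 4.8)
The plan is to exploit that $H_4$ has index two in $G$, so that the natural projection
\[
p\colon E_\lambda=S_\lambda/H_4 \longrightarrow S_\lambda/G\cong\mathbb{P}^1
\]
is a two-fold regular covering, and to read off both the genus of $E_\lambda$ and its branch locus from the action of $G$ already determined. First I would record the internal structure of $H_4=\langle r^{-2},sr^{-1}\rangle$: its rotations are exactly the even powers forming $\langle r^2\rangle$, a cyclic group of order $q$ (recall $q$ is odd), while its reflections are exactly the \emph{odd} ones $sr^{2k-1}$. Setting $\rho=r^2$ and $\tau=sr^{-1}$ one checks $\tau^2=1$ and $\tau\rho\tau^{-1}=\rho^{-1}$, which confirms $H_4\cong\mathbf{D}_q$ of order $2q$; hence $[G:H_4]=2$ and $p$ has degree two.

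Next, to locate the branch values of $p$ I would use multiplicativity of ramification indices in the factorization $\pi_G=p\circ\pi_{H_4}$, where $\pi_{H_4}\colon S_\lambda\to E_\lambda$ is the quotient map. For $x\in S_\lambda$ with stabilizer $G_x$ one has $e_p(\pi_{H_4}(x))=|G_x|/|H_4\cap G_x|$, so $p$ ramifies at a point precisely when $G_x\not\subseteq H_4$; it therefore suffices to intersect $H_4$ with the stabilizers lying over the four branch values of $\pi_G$. Over $\lambda$ the stabilizer is $\langle r\rangle$ and $H_4\cap\langle r\rangle=\langle r^2\rangle$ has index two, so $e_p=2$; over $1$ the stabilizer is $\langle r^q\rangle$ with $r^q$ an odd rotation, so $H_4\cap\langle r^q\rangle=1$ and $e_p=2$; and over $\infty$ and $0$ the stabilizers lie in the conjugacy class of the \emph{even} reflections (the class of $s$, which also contains $sr^{-2}$), none of which belongs to $H_4$, so again $e_p=2$. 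Since $\deg p=2$, in each case the ramification index $2$ forces a single preimage, i.e. $p$ is totally ramified over each of $\infty,0,1,\lambda$; and over any other point of $\mathbb{P}^1$ the fibre of $\pi_G$ is a free $G$-orbit, so $p$ is unramified there.

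Finally I would assemble these facts. Riemann--Hurwitz for the degree-two map $p$ with exactly four branch points gives $2g(E_\lambda)-2=2(-2)+4=0$, so $g(E_\lambda)=1$; moreover, since $\lambda\in\Omega$ the four values $0,1,\infty,\lambda$ are pairwise distinct. A connected two-fold covering of $\mathbb{P}^1$ branched precisely over $\{0,1,\infty,\lambda\}$ is unique up to isomorphism and is the curve $y^2=x(x-1)(x-\lambda)$, which identifies $E_\lambda$ as claimed. The delicate point will be the bookkeeping of stabilizers --- specifically verifying the even/odd dichotomy for the reflections and for $r^q$ so that all four values genuinely ramify and none splits, together with confirming that no extra branching occurs; once this is in place the conclusion is immediate. (As an alternative, one could instead compute $\langle r^2,sr^{-1}\rangle$-invariant functions directly in the hyperelliptic model of Theorem~\ref{modelo}, but the branch-locus argument above avoids that computation.)
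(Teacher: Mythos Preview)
Your proof is correct and follows essentially the same route as the paper: factor $\pi_G$ through the index-two normal subgroup $H_4$, determine the ramification of the resulting degree-two map $E_\lambda\to\mathbb{P}^1$ by intersecting each stabilizer in the generating vector $(s,sr^{-2},r^q,r^{q+2})$ with $H_4$, and finish with Riemann--Hurwitz. Your explicit even/odd bookkeeping (rotations $\langle r^2\rangle$ and reflections $sr^{\text{odd}}$ in $H_4$) and the direct use of $e_p=|G_x|/|H_4\cap G_x|$ make the argument slightly crisper than the paper's point-counting, but the underlying idea is identical.
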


\begin{proof} The normality of $H_4$ as a subgroup of $G$ implies that the quotient group $H:=G/H_4 \cong \mathbb{Z}_2$ acts on $E_{\lambda}.$ Let  $$\pi_1: S_{\lambda} \to E_{\lambda} \hspace{0,3 cm} \mbox{and} \hspace{0,3 cm} \pi_2: E_{\lambda} \to \mathbb{P}^1$$denote the branched regular covering maps given by the action of $H_4$ on $S_{\lambda},$ and by the action of $H$ on $E_{\lambda}$ respectively. The following diagram commutes:$$
\begin{tikzpicture}[node distance=1 cm, auto]
  \node (P) {$S_{\lambda}$};
  \node (A) [below of=P, left of=P] {$\mathbb{P}^1$};
  \node (C) [below of=B, right of=P] {$E_{\lambda}$};
  \draw[->] (P) to node [swap] {$\pi_G$} (A);
  \draw[->] (P) to node {$\pi_1$} (C);
  \draw[->] (C) to node [swap] {$\pi_2$} (A);
\end{tikzpicture}
$$

Following the same notations used in the proof of Theorem \ref{modelo}, we can assert that: 
\begin{enumerate}
\item[(a)]  the $2q$ different points $\beta'_1, \ldots, \beta'_{2q}$ ($\gamma'_1, \ldots, \gamma'_{2q}$ and $\alpha'_1, \ldots, \alpha'_{2q},$ respectively) lying over $\infty$ (over 0 and over 1,  respectively) by $\pi_G$ are not ramification points of $\pi_1$ and therefore they are sent to one point in $E_{\lambda}.$ Thereby, $\infty$ (0 and 1, respectively) is a branch value of $\pi_2$ marked with two,

\s
%
%
%
%

\item[(b)]  the two different points $\alpha'_{2q+1}$ and $ \alpha'_{2q+2}$ lying over $\lambda$ by $\pi_G$ are  ramification points of $\pi_1;$ the intersection of their stabilizer subgroup with $H_4$ having order $q.$ Thus, they are sent to one point in $E_{\lambda},$ and $\lambda$ is a branch value of $\pi_2$ marked with two. 
\end{enumerate} 

Thus, $E_{\lambda}$ is endowed with a two-fold regular covering map over the projective line, with four branch values. As the genus of $E_{\lambda}$ is one (Riemann-Hurwitz formula), the result follows.
\end{proof}

\section{The group algebra decomposition of $JS$} \label{s4}

In this section we consider the Jacobian variety $JS_{\lambda}$  and study the group algebra decomposition of it with respect to its full automorphism group $G.$ In order to state the results, we start by studying the representations of $G$ and the generating vectors representing the action of $G$ on $S_{\lambda}.$

It is well-known that the dihedral group $$G=\langle r,s : r^{2q}=s^2=(sr)^2=1\rangle$$has, up to equivalence, 4 complex irreducible representations of degree one; namely,\begin{displaymath}
V_1 : \left\{ \begin{array}{ll}
r \to 1 \\
s \to 1 
\end{array} \right. V_2 : \left\{ \begin{array}{ll}
r \to 1 \\
s \to -1 
\end{array} \right. V_3 : \left\{ \begin{array}{ll}
r \to -1 \\
s \to 1 
\end{array} \right. V_4 : \left\{ \begin{array}{ll}
r \to -1 \\
s \to -1 
\end{array} \right.
\end{displaymath}and $q-1$ complex irreducible representations of degree two; namely,
\begin{displaymath}
V_{k+4} : \, r \mapsto \mbox{diag}(\omega_{2q}^k, \bar{\omega}_{2q}^k) 
 ,\  s \mapsto \left( \begin{smallmatrix}
0 & 1\\
1 & 0 \\
\end{smallmatrix} \right)
\end{displaymath}for $1 \le k \le q-1$ and $\omega_t=\mbox{exp}(\tfrac{2 \pi i}{t}).$

\begin{lemm} \mbox{}
\begin{enumerate}
\item[(1)] The rational irreducible representations of $G,$ up to equivalence, are:
\begin{enumerate}
\item[(a)] four of degree 1; namely $W_i:=V_i$ for $1 \le i \le 4$ and \item[(b)] two of degree $q-1$; namely $$W_5=\oplus_{\sigma \in G_5} V_5^{\sigma} \hspace{0.3 cm} \mbox{and} \hspace{0.3 cm} W_6=\oplus_{\sigma \in G_6}  V_6^{\sigma}$$where $G_5$ and $G_6$ denote the Galois group associated to the extensions $\mathbb{Q} \le \mathbb{Q}(\omega_{2q} + \bar{\omega}_{2q})$ and $\mathbb{Q} \le \mathbb{Q}(\omega_{q} + \bar{\omega}_{q})$ respectively, and $\omega_t=\mbox{exp}(\tfrac{2 \pi i}{t}).$
\end{enumerate}
\s
\item[(2)] Let $\lambda \in \Omega.$ The group algebra decomposition of $JS_{\lambda}$ with respect to $G$ is $$JS_{\lambda} \sim_G B_1 \times B_2 \times B_3 \times B_4 \times B_5^2 \times B_6^2$$where $B_{j}$ stands for the factor associated to the representation $W_{j}.$
\end{enumerate}
\end{lemm}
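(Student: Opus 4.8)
The plan for (1) is to pass from the given complex irreducibles to their rational counterparts by computing character fields and sorting the representations into Galois orbits. The four degree-one representations have characters valued in $\{\pm 1\}\subset\mathbb{Q}$, so each is already rational and gives $W_i=V_i$ for $1\le i\le 4$. For the degree-two representation $V_{k+4}$ the character is $\chi(r^m)=\omega_{2q}^{km}+\bar\omega_{2q}^{km}$, so its character field is $K_{V_{k+4}}=\mathbb{Q}(\omega_{2q}^k+\bar\omega_{2q}^k)$. The decisive point is a parity dichotomy: when $k$ is odd we have $\gcd(k,2q)=1$ (since $k<q$ and $q$ is prime), so $\omega_{2q}^k$ is a primitive $2q$-th root of unity and $K_{V_{k+4}}=\mathbb{Q}(\omega_{2q}+\bar\omega_{2q})$; when $k=2j$ is even, $\omega_{2q}^k=\omega_q^{j}$ is a primitive $q$-th root of unity and $K_{V_{k+4}}=\mathbb{Q}(\omega_q+\bar\omega_q)$.

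Next I would determine the Galois orbits and check the count. Since $[\mathbb{Q}(\omega_{2q}+\bar\omega_{2q}):\mathbb{Q}]=\phi(2q)/2=(q-1)/2$ and likewise $[\mathbb{Q}(\omega_q+\bar\omega_q):\mathbb{Q}]=\phi(q)/2=(q-1)/2$, each character field has degree $(q-1)/2$ over $\mathbb{Q}$. The group $\mathrm{Gal}(\mathbb{Q}(\omega_{2q})/\mathbb{Q})\cong(\mathbb{Z}/2q\mathbb{Z})^{*}$ acts on $V_5$ (the case $k=1$) by $\sigma_a\colon r\mapsto\mathrm{diag}(\omega_{2q}^{a},\bar\omega_{2q}^{a})$ with $a$ odd, sweeping out exactly the $(q-1)/2$ representations with odd $k$; symmetrically the even-$k$ representations constitute the single orbit of $V_6$ (the case $k=2$). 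Hence the degree-two complex irreducibles break into precisely two Galois orbits, each of size $(q-1)/2$, producing the rational irreducibles $W_5$ and $W_6$, each of degree $2\cdot(q-1)/2=q-1$. A dimension check confirms nothing is lost: the $4+2(q-1)$ complex irreducibles satisfy $\sum d_{V_i}^{2}=4\cdot 1+4(q-1)=4q=|G|$.

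The one genuinely technical ingredient is the Schur index, which I would show equals $1$ for every irreducible. For the degree-two representations this amounts to a realization over the (totally real) character field; an explicit one is $r\mapsto\left(\begin{smallmatrix}0&-1\\1&t\end{smallmatrix}\right)$ and $s\mapsto\left(\begin{smallmatrix}1&t\\0&-1\end{smallmatrix}\right)$ with $t=\omega_{2q}^{k}+\bar\omega_{2q}^{k}\in K_{V_{k+4}}$, whose characteristic polynomial $x^2-tx+1$ has roots $\omega_{2q}^{k},\bar\omega_{2q}^{k}$ and which satisfies $s^2=1$ and $srs=r^{-1}$; thus $s_{V_i}=1$ for all $i$. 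With part (1) and these Schur indices in hand, part (2) is immediate from the group algebra decomposition \eqref{lg}: the multiplicities are $n_i=d_{V_i}/s_{V_i}$, so $n_i=1$ for the four degree-one factors and $n_5=n_6=2/1=2$, yielding $JS_{\lambda}\sim_G B_1\times B_2\times B_3\times B_4\times B_5^2\times B_6^2$. The main obstacle is therefore the Schur-index verification; the remainder is bookkeeping with characters and Galois orbits.
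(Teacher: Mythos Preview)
Your proof is correct and follows the same approach as the paper, which simply appeals to the general construction of rational irreducible representations recalled in Subsection~\ref{rep} and to the group-algebra decomposition theorem of Subsection~\ref{GAD}. You have filled in the character-field, Galois-orbit, and Schur-index computations that the paper leaves implicit, but the underlying route is identical.
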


\begin{proof}
The proof of part (1) follows directly from the way in which the rational irreducible representations of a group are constructed (see  Subsection \ref{rep}).

The proof of part (2) is a direct consequence of (1) together with the group algebra decomposition theorem (see  Subsection \ref{GAD}).
\end{proof}

As anticipated in the introduction of this article, to compute the dimension of the factors $B_j$ (which may be zero) we need to choose a generating vector representing the action of $G$ on $S_{\lambda};$ the following lemma provides all those possible choices.

\begin{lemm} \label{vgopciones} Let $\sigma$ be a generating vector of $G$ of type $(2,2,2,2q)$. Then there exist integers $e_1, e_2$ with $e_1-e_2$ even and not congruent to $0$ modulo $2q$, such that
\begin{equation*} \label{vgpar} \sigma = (sr^{e_1}, sr^{e_2}, r^{q}, r^{e_1-e_2+q}) \end{equation*}
up to the action of the symmetric group  $\mathbf{S}_3$ over the first three entries.
\end{lemm}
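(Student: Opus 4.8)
The plan is to determine all generating vectors of $G=\mathbf{D}_{2q}$ of type $(2,2,2,2q)$ directly from the defining conditions (a)--(c), and then show they all reduce to the claimed normal form. First I would record the basic structure of $G$: the elements of order $2$ are $r^q$ together with the $2q$ reflections $sr^t$, while the unique cyclic subgroup $\langle r\rangle$ of order $2q$ contains the only elements of order $2q$; these are $r^e$ with $\gcd(e,2q)=1$, i.e.\ $e$ odd and not divisible by $q$. A generating vector $\sigma=(c_1,c_2,c_3,c_4)$ must satisfy $\mathrm{order}(c_1)=\mathrm{order}(c_2)=\mathrm{order}(c_3)=2$, $\mathrm{order}(c_4)=2q$, the product relation $c_1c_2c_3c_4=1$, and the generation condition $\langle c_1,c_2,c_3,c_4\rangle=G$.

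The key observation is that not all three involutions can be reflections while still generating $G$ together with an element of $\langle r\rangle$. Since $c_4\in\langle r\rangle$, the subgroup generated by $\sigma$ is contained in $\langle r\rangle$ unless at least one $c_i$ is a reflection; and conversely a single reflection $sr^t$ together with a generator $r^e$ of $\langle r\rangle$ already generates all of $G$. I would therefore analyze which of $c_1,c_2,c_3$ equal $r^q$ and which are reflections. A parity argument on the product relation is the heart of the matter: writing each reflection as $sr^{a}$ and using $s r^a s=r^{-a}$, the product $c_1c_2c_3c_4$ lies in $\langle r\rangle$ (equals an element $r^{*}$) precisely when the number of reflections among the $c_i$ is even. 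Here exactly the three order-$2$ elements $c_1,c_2,c_3$ are candidates to be reflections, and $c_4=r^e$ contributes none; so the count of reflections among $c_1,c_2,c_3$ must be even, forcing it to be $0$ or $2$. The count $0$ would put all four entries in $\langle r^q\rangle\cup\langle r\rangle$ and fail to generate $G$, so exactly two of $c_1,c_2,c_3$ are reflections and the third is $r^q$.

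Using the $\mathbf{S}_3$ freedom to permute the first three entries, I would normalize so that $c_1=sr^{e_1}$, $c_2=sr^{e_2}$ are the two reflections and $c_3=r^q$. The product relation $c_1c_2c_3c_4=1$ then reads $sr^{e_1}\,sr^{e_2}\,r^{q}\,c_4=1$; computing $sr^{e_1}sr^{e_2}=r^{e_2-e_1}$ gives $c_4=r^{\,e_1-e_2-q}=r^{\,e_1-e_2+q}$ (since $r^{2q}=1$), which is exactly the claimed fourth entry. It remains to extract the constraints on $e_1,e_2$: the condition $\mathrm{order}(c_4)=2q$ means $e_1-e_2+q$ is coprime to $2q$; as $q$ is odd prime, $e_1-e_2+q$ is a unit mod $2q$ iff it is odd and not $\equiv 0\pmod q$, i.e.\ $e_1-e_2$ is even and $e_1-e_2\not\equiv 0\pmod{2q}$, which is the stated hypothesis. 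Finally I would verify that generation holds automatically under these constraints: a reflection $sr^{e_1}$ together with the order-$2q$ element $c_4$ generates $G$ because $\langle c_4\rangle=\langle r\rangle$ and adjoining any reflection yields the full dihedral group.

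I expect the main obstacle to be bookkeeping rather than conceptual: the even/odd parity analysis of how many reflections appear, combined with correctly translating ``$c_4$ has order $2q$'' into the simultaneous conditions ``$e_1-e_2$ even'' and ``$e_1-e_2\not\equiv 0\pmod{2q}$''. One must be careful that $q$ is odd so that the parity of $e_1-e_2+q$ is opposite to that of $e_1-e_2$, and that divisibility by $q$ is ruled out precisely by the non-congruence condition; handling both prime factors $2$ and $q$ of $2q$ separately via the Chinese Remainder Theorem makes this transparent.
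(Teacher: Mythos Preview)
Your proposal is correct and follows essentially the same route as the paper: a case analysis on which of the three involutions are reflections versus the central involution $r^{q}$, followed by the direct computation of $c_4$ and the translation of $\mathrm{order}(c_4)=2q$ into the parity and non-congruence conditions on $e_1-e_2$. The only cosmetic difference is that the paper phrases the case analysis via the three conjugacy classes $C_1=\{sr^{n}:n\ \text{even}\}$, $C_2=\{sr^{m}:m\ \text{odd}\}$, $C_3=\{r^{q}\}$ rather than via your rotation/reflection parity count, but the content is identical.
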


\begin{proof}
Let us suppose that $$\sigma = (g_1, g_2, g_3, g_4=(g_1g_2g_3)^{-1})$$is a generating vector of $G$ of type $(2,2,2,2q).$ It is not difficult to see that $G$ has exactly three conjugacy classes of elements of order two; namely $$C_1=\{sr^{n} : 0 \le n < 2q \mbox{ even}\},  \hspace{0,3 cm}C_2=\{sr^{m} : 1 \le m < 2q \mbox{ odd}\}$$and $C_3=\{r^q\}.$ Moreover, there are $\tfrac{q-1}{2}$ conjugacy classes of elements of $G$ of order $2q$; namely $\{r^t, r^{-t} \}$ for each odd integer $1 \le t < q.$

Since $g_1$, $g_2$ and $g_3$ must generate $G$ and since their product $g_4^{-1}$ must have order $2q$, it is straightforward to see that:
\begin{enumerate}
\item[(a)] the elements $g_1, g_2$ and $g_3$ cannot belong simultaneously to only one of the conjugacy classes $C_1, C_2$ or $C_3,$
\item[(b)] the elements $g_1, g_2$ and $g_3$ cannot belong to three different  conjugacy classes $C_1, C_2$ and $C_3,$ and
\item[(c)] one (and only one) of the elements $g_1, g_2$ or $g_3$ must belong to the conjugacy class $C_3.$
\end{enumerate}

Hence, up to a permutation in $\mathbf{S}_3$, we may assume $\sigma$ to be of the form $$(sr^{e_1}, sr^{e_2}, r^{q}, r^{e_1-e_2+q})$$where $0 \le e_1,e_2 < 2q$ are simultaneously odd or simultaneously even. As $e_1-e_2+q$ must be coprime with $2q$, the difference $e_1-e_2$ is not congruent with zero modulo $2q$. The proof is done.
\end{proof}

\begin{rema} \mbox{}
\begin{enumerate}
\item[(a)] We should mention that the proof of the previous lemma could be derived from the proof of Theorem 7 in \cite{costa}. 
\item[(b)] Following \cite[Remark 8]{costa} there is a unique topological class of action of $\mathbf{D}_{2q}$ on Riemann surfaces of genus $q$ with signature $(2,2,2,2q);$ consequently, every generating vector of $G$ of the desired type can be chosen to represent the action of $G$ on $S_{\lambda}.$
\end{enumerate}
\end{rema}

We now proceed to analyze how the choice of the generating vector changes the dimension of the factors arising in the group algebra decomposition of $JS_{\lambda}$ with respect to $G.$ To accomplish this task, it is convenient to bring in the following equivalence relation:

\begin{defi} Two generating vectors $\sigma_1$ and $\sigma_2$ are termed {\it essentially equal} with respect to the action of $G$ on $S_{\lambda}$ if $\dim_{\tau_1}(B_j) = \dim_{\tau_2}(B_j)$ 
for all $j,$ where $\tau_i$ is the geometric signature associated to $\sigma_i.$  
\end{defi}

\begin{lemm} \label{todos} Each generating vector of $G$ of type $(2,2,2,2q)$ is essentially equal to  $$\sigma_{0}=(s, sr^{-2}, r^q, r^{q+2}) \hspace{0.4 cm} \mbox{ or to } \hspace{0.4 cm} \sigma_{1}=(sr, sr^{-1}, r^q, r^{q+2}).$$
\end{lemm}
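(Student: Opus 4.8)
The plan is to reduce every admissible generating vector to one of the two listed representatives by combining the structural description from Lemma \ref{vgopciones} with the fact that essential equality only depends on the geometric signature $\tau$, which in turn is determined by the conjugacy classes of the subgroups generated by the entries. By Lemma \ref{vgopciones}, any generating vector of type $(2,2,2,2q)$ is, up to the action of $\mathbf{S}_3$ on the first three entries, of the form $\sigma = (sr^{e_1}, sr^{e_2}, r^q, r^{e_1-e_2+q})$ with $e_1-e_2$ even and $\not\equiv 0 \pmod{2q}$. Since the dimension formula \eqref{dimensiones} depends on $\sigma$ only through the multiset of conjugacy classes $[m_k, C_k]$ of the cyclic subgroups generated by its entries, the first move is to record, for each entry, which conjugacy class of subgroups it determines: the third entry $r^q$ always lies in the class $C_3 = \{r^q\}$, the last entry $r^{e_1-e_2+q}$ generates a cyclic subgroup of order $2q$ whose class is $\{r^t, r^{-t}\}$ for the appropriate odd $t$, and the first two entries $sr^{e_1}, sr^{e_2}$ fall into the reflection classes $C_1$ or $C_2$ according to the parity of $e_1$ and $e_2$.

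Next I would observe that, because $e_1 - e_2$ is even, the exponents $e_1$ and $e_2$ have the same parity, so $sr^{e_1}$ and $sr^{e_2}$ lie in the same reflection class: either both in $C_1$ (when $e_1, e_2$ are even) or both in $C_2$ (when they are odd). This naturally splits the analysis into two cases, corresponding precisely to the two candidate representatives $\sigma_0$ (both entries even, hence in $C_1$) and $\sigma_1$ (both entries odd, hence in $C_2$). Within each case, the remaining freedom is in the conjugacy class of the order-$2q$ subgroup generated by the last entry, governed by $e_1 - e_2 \bmod 2q$; I would then argue that the contribution of this subgroup to the dimension formula \eqref{dimensiones} is the same for every admissible value of $e_1 - e_2$. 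The key computation here is that for the two-dimensional representations $V_{k+4}$, the dimension $d_{V_{k+4}}^{\langle r^t\rangle}$ of the fixed subspace under any order-$2q$ cyclic subgroup is independent of $t$ (namely it is $0$, since no nontrivial power of $r$ fixes a nonzero vector in a faithful two-dimensional piece), while the contributions of the reflection subgroups depend only on the class $C_1$ versus $C_2$.

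With these observations in place, the argument concludes by showing that all data entering \eqref{dimensiones} — namely the quantities $d_{V_j}$ and $d_{V_j}^{G_k}$ for each representation $W_j$ and each branching subgroup — coincide for $\sigma$ and for the appropriate $\sigma_i$. Concretely, I would tabulate $d_{V_j}^{\langle s \rangle}$, $d_{V_j}^{\langle sr^{-1}\rangle}$, $d_{V_j}^{\langle r^q\rangle}$ and $d_{V_j}^{\langle r\rangle}$ across the six rational irreducible representations from the preceding lemma, verify that these fixed-space dimensions depend only on whether the reflection entries lie in $C_1$ or $C_2$ (and not on the specific exponents), and hence that $\dim_\tau(B_j)$ is constant within each of the two cases. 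This yields $\sigma \sim_{\text{ess}} \sigma_0$ in the even case and $\sigma \sim_{\text{ess}} \sigma_1$ in the odd case.

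The main obstacle I anticipate is correctly handling the distinction between the two reflection classes $C_1$ and $C_2$ and confirming that they genuinely yield different fixed-space dimensions for the degree-two representations — this is what prevents collapsing $\sigma_0$ and $\sigma_1$ into a single class, and so the heart of the proof is a careful bookkeeping of $d_{V_j}^{G_k}$ rather than any deep structural fact. A secondary subtlety is to ensure that the $\mathbf{S}_3$-reduction in Lemma \ref{vgopciones} does not interfere with the essential-equality relation; since permuting the first three entries permutes the branch data without changing the underlying multiset of conjugacy classes, the geometric signature, and therefore every $\dim_\tau(B_j)$, is manifestly invariant under it, so this causes no difficulty.
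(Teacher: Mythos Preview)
Your approach is correct and essentially mirrors the paper's: both invoke Lemma~\ref{vgopciones}, observe that essential equality depends only on the conjugacy classes of the cyclic subgroups generated by the entries (the paper implements this concretely by conjugating by a suitable power of $r$ to normalize to $\sigma_{0,n}$ or $\sigma_{1,n}$), and then split according to the common parity of $e_1,e_2$. One correction to your anticipated obstacle: it is the degree-\emph{one} representations $V_3$ and $V_4$, not the degree-two ones, that separate the reflection classes $C_1$ and $C_2$ --- indeed $d_{V_{k+4}}^{\langle s\rangle}=d_{V_{k+4}}^{\langle sr\rangle}=1$ for every $k$, whereas the paper verifies $\dim_{\tau_0}(B_3)=0\neq 1=\dim_{\tau_1}(B_3)$ to confirm that $\sigma_0$ and $\sigma_1$ are not essentially equal.
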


\begin{proof} Let $\sigma$ be a generating vector of $G$ of type $(2,2,2,2q)$ for the action of $G$ on $S_{\lambda}.$ By Lemma \ref{vgopciones} we can suppose $$\sigma = (sr^{e_1}, sr^{e_2}, r^{q}, r^{e_1-e_2+q})$$for some integers $e_1, e_2$ with $e_1-e_2$ even and not congruent to $0$ modulo $2q,$ up to the action of  $\mathbf{S}_3$ on the first three entries. The action of $\iota \in \mathbf{S}_3$ over the three first entries produces the following change on the fourth one:  $$r^{e_1-e_2+q} \mapsto \iota(r^{e_1-e_2+q})=r^{\pm(e_1-e_2+q)}$$sending $r^{e_1-e_2+q}$ into either itself or its inverse. Hence, in spite of the fact that the corresponding geometric signature changes under permutations in $\mathbf{S}_3,$ the dimension of the each factor $B_j$ remains the same ($\iota$ permutes the summands in the sum \eqref{dimensiones}); thus the generating vectors $\sigma$ and $\iota(\sigma)$ are essentially equal.

We remark the obvious observation that the geometric signature associated to a given generating vector is kept invariant under inner automorphisms of the group. Now, after conjugating every element in $\sigma$ by \begin{displaymath}
 \left\{ \begin{array}{ll}
r^{-e_1/2} & \mbox{if } e_1 \mbox{ and } e_2 \mbox{ are even; }\\
r^{-(e_1+1)/2} & \mbox{if } e_1 \mbox{ and } e_2 \mbox{ are odd,}
\end{array} \right. \end{displaymath}
we obtain normalized generating vectors$$\sigma_{0, n}:=(s, sr^{-n}, r^q, r^{q+n}) \hspace{0,3 cm} \mbox{and} \hspace{0,3 cm} \sigma_{1, n}:=(sr, sr^{1-n}, r^q, r^{q+n})$$ for $e_1, e_2$ even, and for $e_1, e_2$ odd respectively, where $n=e_1-e_2.$

Note that if $n$ and $m$ are distinct even numbers, then $\sigma_{0, n}$ and $\sigma_{0, m}$ are essentially equal, and $\sigma_{1, n}$ and $\sigma_{1, m}$ are essentially equal. 

Hence, the result follows after verifying that  $\sigma_{0} = \sigma_{0, 2}$ and $\sigma_{1} =\sigma_{1, 2}$ are not essentially equal; this follows from $\dim_{\tau_{0}}(B_3)=0$ and $\dim_{\tau_{1}}(B_3)=1. $ \end{proof}

\begin{prop} \label{theo1}
Let $\lambda \in \Omega,$ and consider the group algebra decomposition of $JS_{\lambda}$ with respect to $G$ $$JS_{\lambda} \sim_G B_1 \times B_2 \times B_3 \times B_4 \times B_5^2 \times B_6^2.$$

If $\tau_0$ denotes the geometric signature associated to $\sigma_{0}=(s, sr^{-2}, r^q, r^{q+2})$ then\begin{displaymath}
\mbox{dim}_{\tau_{0}} (B_{j})= \left\{ \begin{array}{ll}
 \,\,\,\, 0& \textrm{if $j = 0,1,2,3,6$}\\
  \,\,\,\,  1& \textrm{if $j = 4$}\\
\tfrac{q-1}{2}  & \textrm{if $j=5$}
  \end{array} \right.
\end{displaymath}

If $\tau_1$ denotes the geometric signature associated to $\sigma_{1}=(sr, sr^{-1}, r^q, r^{q+2}),$ then 
\begin{displaymath}
\mbox{dim}_{\tau_{0}} (B_{j})= \left\{ \begin{array}{ll}
 \,\,\,\, 0& \textrm{if $j = 0,1,2,4,6$}\\
  \,\,\,\,  1& \textrm{if $j = 3$}\\
\tfrac{q-1}{2}  & \textrm{if $j=5$}
  \end{array} \right.
\end{displaymath} In particular, $JS_{\lambda}$ contains an elliptic curve.
\end{prop}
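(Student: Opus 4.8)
The plan is to apply the dimension formula \eqref{dimensiones} directly to each of the two generating vectors, using that the signature is $(0;2,2,2,2q)$, so that $\gamma=0$ and $l=4$. The trivial representation $W_1=V_1$ must be treated separately: since $S_\lambda/G\cong\mathbb{P}^1$ has genus zero, we have $B_1\sim J(S_\lambda/G)=0$. For the remaining representations I would record the two ingredients appearing in \eqref{dimensiones}, namely the constant $k_i$ and the fixed-space dimensions $d_{V_i}^{G_k}$. The constants are immediate: $k_i=1$ for the four representations of degree one, while for the two representations of degree two one has $k_5=k_6=\tfrac{q-1}{2}$, because the dihedral group has Schur index $1$ and the character fields $\mathbb{Q}(\omega_{2q}+\bar\omega_{2q})$ and $\mathbb{Q}(\omega_q+\bar\omega_q)$ are the maximal real subfields of the corresponding cyclotomic fields, each of degree $\tfrac{q-1}{2}$ over $\mathbb{Q}$ (this is consistent with the shape $B_5^2,B_6^2$ of the decomposition).

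Next I would compute, for each stabilizer generator $c_k$ of the given vector, the dimension $d_{V_i}^{G_k}$ of the subspace of $V_i$ fixed by $\langle c_k\rangle$; for a cyclic stabilizer this is the multiplicity of the eigenvalue $1$ of the matrix by which $c_k$ acts in $V_i$. The computation rests on three elementary observations. First, on a degree-one representation each value $d_{V_i}^{G_k}$ equals $1$ or $0$ according to whether the character of $V_i$ takes the value $1$ or $-1$ at $c_k$; here one uses repeatedly that $q$ is odd, so that $(-1)^q=(-1)^{q+2}=-1$. Second, every reflection $sr^{t}$ acts on a degree-two representation as an involution of trace $0$ and determinant $-1$, hence with eigenvalues $\pm 1$, so that $d_{V_5}^{\langle sr^{t}\rangle}=d_{V_6}^{\langle sr^{t}\rangle}=1$. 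Third, the element $r^{q}$ acts on $V_5$ as $\mathrm{diag}(\omega_{2q}^{q},\bar\omega_{2q}^{q})=-I$ (giving fixed dimension $0$) and on $V_6$ as $\mathrm{diag}(\omega_{q}^{q},\bar\omega_{q}^{q})=I$ (giving fixed dimension $2$), while the generator $r^{q+2}$ of the order-$2q$ stabilizer generates $\langle r\rangle$, which acts without nonzero fixed vectors on every nontrivial representation.

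With these values in hand, substituting into \eqref{dimensiones} for $\sigma_0=(s,sr^{-2},r^{q},r^{q+2})$ yields the first table, and the same computation for $\sigma_1=(sr,sr^{-1},r^{q},r^{q+2})$ yields the second. The only difference between the two cases is that the reflections in $\sigma_0$ lie in the class $C_1$, where they act as $+1$ on $V_3$ and as $-1$ on $V_4$, whereas those in $\sigma_1$ lie in $C_2$, reversing these two values; this is exactly what interchanges the roles of $B_3$ and $B_4$. Finally, since $B_4$ (for $\tau_0$) and $B_3$ (for $\tau_1$) are one-dimensional abelian subvarieties of $JS_\lambda$ arising as images of idempotents, each is an elliptic curve contained in $JS_\lambda$, which gives the last assertion.

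The main obstacle is not conceptual but bookkeeping: one must keep careful track of the fixed-space dimensions $d_{V_i}^{G_k}$, in particular of the parity-dependent signs and of the fact that $r^{q}$ acts as the identity on $V_6$ but as minus the identity on $V_5$; these are precisely the entries that force $\dim B_6=0$ while $\dim B_5=\tfrac{q-1}{2}$. As a sanity check I would verify in each case that the dimensions add up correctly, namely $\dim B_4+2\dim B_5=1+(q-1)=q$ for $\tau_0$ and $\dim B_3+2\dim B_5=q$ for $\tau_1$, matching $\dim JS_\lambda=q$.
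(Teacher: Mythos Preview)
Your proposal is correct and follows essentially the same approach as the paper: both compute the fixed-space dimensions $d_{V_i}^{G_k}$ for the relevant cyclic stabilizers and then apply formula \eqref{dimensiones}. Your write-up is in fact more explicit than the paper's own proof, which simply records the table of values $d_{V_i}^{\langle g\rangle}$ and declares that the result follows; in particular, your identification of $k_5=k_6=\tfrac{q-1}{2}$, your explanation of why $r^{q}$ acts as $-I$ on $V_5$ but as $I$ on $V_6$, and your dimension sanity check are all helpful additions that the paper leaves implicit.
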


\begin{proof} The genus of the quotient $S/G$ is zero; thus, $\mbox{dim}_{\tau_0}(B_1)=\mbox{dim}_{\tau_1}(B_1)=0.$ The table below summarized the dimension of the vector subspaces of each $V_j$ fixed under the cyclic subgroup $\langle g \rangle,$ for each $g$ arising in the signatures $\sigma_{0}$ and $\sigma_{1}.$\begin{center}
\scalebox{0.7}{
\begin{tabular}{|c|c|c|c|c|c|c|}  \hline
\, &  $\langle s \rangle$ & $\langle sr \rangle$ &   $\langle sr^{-2} \rangle$ & $\langle sr^{-1} \rangle$ &  $\langle r^q \rangle$  & $\langle r^{q+2} \rangle$  \\ \hline
$V_{2}$  & 0 & 0 & 0 & 0 & 1 & 1 \\ \hline
$V_{3}$  & 1 & 0 & 1 & 0 & 0 & 0\\ \hline
$V_{4}$  & 0 & 1 & 0 & 1 & 0 & 0 \\ \hline
$V_{5}$  & 1 & 1 & 1 & 1 & 0 & 0 \\ \hline
$V_{6}$  & 1 & 1 & 1 & 1 & 2 & 0\\ \hline
\end{tabular}}\end{center}Now, the result follows directly as an application of \eqref{dimensiones}.

\end{proof}

\begin{theo} 
Let $\lambda \in \Omega$. The group algebra decomposition of $JS_{\lambda}$ with respect to $G$ does not depend on the choice of the generating vector. 
\end{theo}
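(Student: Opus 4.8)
The plan is to show that although Lemma \ref{todos} reduces the analysis to the two representatives $\sigma_0$ and $\sigma_1$, the resulting group algebra decompositions of $JS_\lambda$ coincide as products of abelian varieties, so that the choice of generating vector has no effect. The key observation is that the two decompositions produced in Proposition \ref{theo1} differ only by an interchange of the roles of $B_3$ and $B_4$: for $\sigma_0$ we obtain $\dim(B_4)=1$ and $\dim(B_3)=0$, whereas for $\sigma_1$ we obtain $\dim(B_3)=1$ and $\dim(B_4)=0$, while all other factors (and in particular the two-dimensional piece $B_5^{(q-1)/2}$ together with the vanishing of $B_2,\,B_6$) agree. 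Thus the statement reduces to proving that the factor of dimension one appearing in the decomposition is the \emph{same} abelian subvariety up to isogeny, regardless of whether it is labelled $B_3$ or $B_4$.

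First I would make precise what ``the group algebra decomposition does not depend on the choice of generating vector'' means: the product $B_1\times\cdots\times B_6^2$, viewed as an abstract isogeny decomposition of $JS_\lambda$ into abelian subvarieties, is independent of $\sigma$. The two lists of dimensions from Proposition \ref{theo1} are identical as multisets, namely $\{0,0,0,0,1,\tfrac{q-1}{2},\tfrac{q-1}{2}\}$ (recording $B_5^2$ and $B_6^2$ with multiplicity), so the \emph{sequence of dimensions of the nonzero factors} is the same. What changes is only the bookkeeping index attached to the one-dimensional factor. To upgrade this to a genuine identification, I would invoke the criterion \eqref{lg3}: I would exhibit a subgroup $N$ of $G$ for which the one-dimensional factor is forced to be isogenous to the Jacobian of the quotient $S_\lambda/N$, independently of $\tau$. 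The natural candidate is $N=H_4=\langle r^{-2},sr^{-1}\rangle\cong\mathbf{D}_q$ of Proposition \ref{eli}, whose associated quotient is the elliptic curve $E_\lambda$; then both the $B_3$ of $\tau_1$ and the $B_4$ of $\tau_0$ are isogenous to $JE_\lambda=E_\lambda$, hence to one another.

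The heart of the argument is therefore to check, using the dimension table already displayed in the proof of Proposition \ref{theo1}, that the one-dimensional factor is intrinsically the elliptic curve $E_\lambda$ in both cases. Concretely, I would verify that for the subgroup $H_4$ one has $\dim_{\tau}(B_j)=1$ for exactly one index $j\in\{3,4\}$ and that the corresponding factor satisfies $d_{V_j}^{H_4}=s_{V_j}=1$ with $d_{V_l}^{H_4}=0$ for all other $l$ with $B_l\neq 0$, so that \eqref{lg3} yields $B_j\sim J(S_\lambda/H_4)=E_\lambda$; since $E_\lambda$ is defined by the algebraic model of Proposition \ref{eli} with no reference to the generating vector, the identification is canonical. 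The remaining factors are handled the same way: $B_5$ (of dimension $\tfrac{q-1}{2}$) is common to both signatures and is not affected by the $\mathbf{S}_3$-permutation or inner conjugations used in Lemma \ref{todos}, and $B_1,B_2,B_6$ vanish for both.

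The main obstacle, and the point requiring the most care, is the relabelling ambiguity: the idempotents $e_3,e_4\in\mathbb{Q}[G]$ attached to $W_3,W_4$ are fixed once and for all by the group algebra decomposition theorem and do \emph{not} depend on $\sigma$, yet the \emph{dimensions} of their images $A_{e_3},A_{e_4}$ swap between $\tau_0$ and $\tau_1$. This is only possible because the two geometric signatures $\tau_0,\tau_1$ genuinely correspond to the \emph{same} action of $G$ on $S_\lambda$ (by \cite[Remark 8]{costa} there is a unique topological class of such actions), so the two computations \eqref{dimensiones} are simply two descriptions of one underlying decomposition. I would make this explicit by noting that passing from $\sigma_0$ to $\sigma_1$ is realized by an automorphism of $G$ swapping the representations $V_3$ and $V_4$ (the automorphism $r\mapsto r$, $s\mapsto sr$, which exchanges the two characters sending $r\mapsto -1$), and that such an automorphism permutes the idempotents $e_3\leftrightarrow e_4$ while fixing the isogeny class of $JS_\lambda$; hence the factor $A_{e_3}\times A_{e_4}$ is unchanged as an abelian subvariety even though its two summands exchange dimensions. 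Combining this with the intrinsic identification of the nontrivial summand as $E_\lambda$ completes the proof.
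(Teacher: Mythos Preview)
Your concluding paragraph---that the outer automorphism $\Phi:r\mapsto r,\ s\mapsto sr$ carries $\sigma_0$ to $\sigma_1$ and simultaneously swaps $W_3\leftrightarrow W_4$, so the two computations describe one and the same decomposition with the labels $B_3,B_4$ interchanged---is precisely the paper's proof, and that paragraph on its own already settles the theorem.

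The strategy you place front and centre, however, contains a genuine gap. You propose to verify, for each $\tau\in\{\tau_0,\tau_1\}$, that the unique index $j\in\{3,4\}$ with $\dim_\tau(B_j)=1$ also satisfies $d_{V_j}^{H_4}=1$, and then to invoke \eqref{lg3} with $N=H_4=\langle r^{-2},sr^{-1}\rangle$ to obtain $B_j\sim J(S_\lambda/H_4)=E_\lambda$ in both cases. But $d_{V_j}^{H_4}$ is a purely representation-theoretic quantity, independent of $\tau$: one has $d_{V_4}^{H_4}=1$ while $d_{V_3}^{H_4}=0$ (in $V_3$ the element $sr^{-1}$ acts by $-1$). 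Hence for $\tau_1$, where the surviving one-dimensional factor is $B_3$, the hypothesis $d_{V_3}^{H_4}=s_{V_3}$ of criterion \eqref{lg3} fails, and the conclusion $B_3\sim E_\lambda$ is not available by this route. To repair it you would need a different subgroup for $\tau_1$---namely $\Phi(H_4)=\langle r^{2},s\rangle$---together with a separate argument that $S_\lambda/\Phi(H_4)$ and $S_\lambda/H_4$ are isogenous, which is again the outer-automorphism observation in disguise.

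The paper avoids the $E_\lambda$ detour entirely: from $JS_\lambda\sim_{G,\sigma_0}B_4\times B_5^2$ and $JS_\lambda\sim_{G,\sigma_1}B_3\times B_5^2$ it reads off that $B_3$ and $B_4$ are isogenous, then invokes $\Phi$ to explain the label swap, deferring the identification $B_4\sim E_\lambda$ to the subsequent Theorem~\ref{theo2}.
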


\begin{proof}
By Lemmata \ref{vgopciones} and \ref{todos}, we only need to compare the decompositions associated to $\sigma_0$ and $\sigma_1.$ By Proposition \ref{theo1}, these decompositions are $$JS_{\lambda} \sim_{G, \sigma_0} B_4 \times B_5^2 \hspace{0.3 cm} \mbox{and} \hspace{0.3 cm}JS_{\lambda} \sim_{G, \sigma_1} B_3 \times B_5^2 $$respectively, showing that $B_3$ and $B_4$ are isogenous. We claim that, in addition, $B_4$ and $B_5$ are equal. Indeed, note that the generating vectors $\sigma_{0}$ and $\sigma_{1}$ are identified by the action of the outer automorphism $\Phi$ of $G$ defined by $r \mapsto r, s \mapsto sr.$ 

Accordingly, at the level of rational irreducible representations, $W_3$ is sent by $\Phi$ to $W_4.$ As a matter of fact, this shows that the roles played by $B_3$ and $B_4$ are interchanged according to the choice of the generating vector employed to compute the dimensions.

The proof is done.
\end{proof}

\begin{rema}The independence of the group algebra decomposition on the choice of the generating vector when there is a unique topological class of action is not new and was firstly noticed by Rojas in \cite[Example 4.3]{yoibero} when she considered the Weyl group $\mathbb{Z}_2^3 \rtimes \mathbf{S}_3$ acting on a Riemann surface of genus three with signature $(2,4,6).$ 

Very recently, the same phenomenon has been noticed by Izquierdo, Jim\'enez and Rojas itself in \cite{IJR} when they studied a two-dimensional family of  Riemann surfaces of genus $2n-1$ with action of $\mathbf{D}_{2n}$ with signature $(2,2,2,2,n)$. 

It is worth recalling that in the two aforementioned cases as well as in the case of the family $\mathcal{F}_q$, the existence of outer automorphisms of the group is the key ingredient. Based on the evidence of explicit examples, it seems reasonable to ask if this is the general situation; however, according to the knowledge of the author, it has not been proved a general result on this respect.
\end{rema}

From now on, we assume the action of $G$ on $S_{\lambda}$ to be determined by the generating vector $\sigma_{0}$ and, in consequence, the group algebra decomposition of $JS_{\lambda}$ with respect to $G$ to be of the form $$JS_{\lambda} \sim_G B_4 \times B_5^2.$$

The following result shows that the factors $B_4$ and $B_5$ have a geometric meaning.

\begin{theo} \label{theo2} Let $\lambda \in \Omega.$ Consider the subgroups $H_4=\langle r^{-2}, sr^{-1} \rangle$ and $H_5=\langle s \rangle$ of $G,$ and the  quotient Riemann surfaces $E_{\lambda}$ and $C_{\lambda}$ given by the action of $H_4$ and of $H_5$ on $S_{\lambda},$ respectively. Then  $$B_4 \sim JE_{\lambda} \hspace{0.3 cm} 
\mbox{and} \hspace{0.3 cm}  B_5 \sim JC_{\lambda} .$$

In particular, $JS_{\lambda}$ decomposes into  a product of Jacobians as follows: \begin{equation*} \label{jacos}JS_{\lambda} \sim_G JE_{\lambda} \times JC_{\lambda}^2.\end{equation*}
 \end{theo}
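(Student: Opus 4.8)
The plan is to identify each factor as a Jacobian of an explicit quotient by applying the criterion \eqref{lg3} stated in Subsection \ref{GAD}. Recall that this criterion says: if a subgroup $N$ of $G$ satisfies $d_{V_i}^N = s_{V_i}$ for a fixed index $i$ and $d_{V_l}^N = 0$ for all other indices $l$ with $B_l \neq 0$, then $B_i \sim J(S_\lambda/N)$. Since we have fixed the generating vector $\sigma_0$, Proposition \ref{theo1} tells us the only nonzero factors are $B_4$ and $B_5$ (the latter with multiplicity two). Thus to locate $B_4$ I need a subgroup $N$ with $d_{V_4}^N = s_{V_4} = 1$ and $d_{V_5}^N = 0$; to locate $B_5$ I need a subgroup with $d_{V_5}^N = s_{V_5} = 1$ and $d_{V_4}^N = 0$. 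All Schur indices here equal one because $G$ is dihedral.

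First I would treat $B_4$ using $N = H_4 = \langle r^{-2}, sr^{-1}\rangle \cong \mathbf{D}_q$. The representation $V_4$ sends $r \mapsto -1$ and $s \mapsto -1$; one computes directly that $r^{-2} \mapsto 1$ and $sr^{-1} \mapsto (-1)(-1)^{-1} = 1$, so $V_4$ restricted to $H_4$ is trivial and $d_{V_4}^{H_4} = 1$. For $V_5$, the generators $r^{-2}$ and $sr^{-1}$ act with no common fixed nonzero vector (the reflection $sr^{-1}$ swaps the two eigenlines of $r$ while $r^{-2}$ scales them by $\bar\omega_{2q}^{\pm 2} \neq 1$), giving $d_{V_5}^{H_4} = 0$. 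Hence \eqref{lg3} yields $B_4 \sim J(S_\lambda/H_4)$, and this quotient is precisely the elliptic curve $E_\lambda$ of Proposition \ref{eli}, so $B_4 \sim JE_\lambda$.

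Next I would treat $B_5$ using $N = H_5 = \langle s \rangle \cong \mathbb{Z}_2$. From the table in the proof of Proposition \ref{theo1} we already read off $d_{V_3}^{\langle s\rangle} = 1$, $d_{V_4}^{\langle s\rangle} = 0$, $d_{V_5}^{\langle s\rangle} = 1$, $d_{V_6}^{\langle s\rangle} = 1$; together with $d_{V_2}^{\langle s\rangle} = 0$ this is almost what is needed, except that the indices $3$ and $6$ have nonzero fixed subspace. However, under the generating vector $\sigma_0$ we have $B_3 = 0$ and $B_6 = 0$ by Proposition \ref{theo1}, so these indices are excluded from the hypothesis of \eqref{lg3} (which only requires control over indices $l$ with $B_l \neq 0$). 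Thus the conditions $d_{V_5}^{H_5} = s_{V_5} = 1$ and $d_{V_4}^{H_5} = 0$ hold among the surviving factors, and \eqref{lg3} gives $B_5 \sim J(S_\lambda/H_5) = JC_\lambda$.

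The main obstacle is making the application of \eqref{lg3} rigorous in the presence of the vanishing factors $B_3$ and $B_6$: the criterion as stated requires $d_{V_l}^N = 0$ for all $l \neq i$ with $B_l \neq 0$, so I must be careful to invoke that the vanishing of $B_3$ and $B_6$ under $\sigma_0$ genuinely removes them from the list of relevant indices, rather than violating the hypothesis. Once this is justified, the remaining verifications are the routine character computations of the fixed-subspace dimensions $d_{V_j}^{H_4}$ and $d_{V_j}^{H_5}$, most of which are already recorded in the table of Proposition \ref{theo1}. Assembling these two isogenies into the group algebra decomposition $JS_\lambda \sim_G B_4 \times B_5^2$ then gives the displayed conclusion $JS_\lambda \sim_G JE_\lambda \times JC_\lambda^2$.
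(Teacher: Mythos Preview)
Your proposal is correct and follows essentially the same approach as the paper: compute the fixed-subspace dimensions $d_{V_4}^{H_4}=d_{V_5}^{H_5}=1$ and $d_{V_4}^{H_5}=d_{V_5}^{H_4}=0$, then invoke the criterion \eqref{lg3} together with Proposition \ref{theo1} to identify $B_4\sim JE_\lambda$ and $B_5\sim JC_\lambda$. Your explicit character computations and your care in noting that the vanishing of $B_3$ and $B_6$ legitimately removes them from the hypothesis of \eqref{lg3} simply make explicit what the paper's terse proof leaves implicit.
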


\begin{proof}
The dimension of the complex vector subspaces of $V_4$ and $V_5$ fixed under the subgroups $H_4$ and $H_5$ are$$d_{V_4}^{H_4}=d_{V_5}^{H_5}=1 \hspace{0.3 cm} \mbox{and} \hspace{0.3 cm} d_{V_4}^{H_5}=d_{V_5}^{H_4}=0.$$

Thus, the result follows after applying the criterion to identify factors in the group algebra decomposition of $JS_{\lambda}$ as Jacobians of quotients of $S_{\lambda}$ (as explained in Subsection \ref{GAD}; see equations \eqref{lg2} and \eqref{lg3}) together with Proposition \ref{theo1}. \end{proof}

\begin{rema} Note that $C_{\lambda}$ is an irregular $2q$-gonal Riemann surface of genus $\tfrac{q-1}{2}.$ An explicit algebraic description of $E_{\lambda}$ has been obtained in Proposition \ref{eli}.\end{rema}

\section{Fields of definition}\label{s5} Let $\mbox{Gal}(\mathbb{C})$ denote the group of field automorphisms of $\mathbb{C}.$ Let $X \subset \mathbb{P}^n$ be a (smooth algebraic) variety and $\sigma \in \mbox{Gal}(\mathbb{C})$. We shall denote by $X^{\sigma}$ the variety defined by the polynomials obtained after applying $\sigma$ to the coefficients of the polynomials which define $X.$

\s

Let $k$ be a subfield of $\mathbb{C}$ and let $\mbox{Gal}(\mathbb{C}/k)$ be the subgroup of $\mbox{Gal}(\mathbb{C})$ consisting of those automorphisms which fix the elements in $k.$ We shall say that $X$ {\it is defined over $k$} if $X=X^{\sigma}$ for all $\sigma \in \mbox{Gal}(\mathbb{C}/k).$ We shall say that $X$ {\it can be defined over $k$} (or that {\it $k$ is a field of definition for $X$}) if there exists an isomorphism $X \to Y$ into a variety $Y \subset \mathbb{P}^m$ which is defined over $k.$ 

\s

By considering the explicit algebraic description of $S_{\lambda}$ provided in Theorem \ref{modelo}, in this section we derive results concerning the field of definitions of $S_{\lambda}$ according to the value of $\lambda.$
 
\subsection{Real Riemann surfaces} An algebraic variety is called {\it real} if it can be defined over the field of the real numbers; equivalently, if it admits an anticonformal involution (i.e. an anticonformal automorphism of order two).

Following \cite[Section 6]{costa}, when we consider the family $\mathcal{F}_q$ as a complex subvariety of the moduli space $\mathscr{M}_q$ of compact Riemann surfaces of genus $q,$ it is isomorphic to the projective line minus three points. Furthermore, $\mathcal{F}_q \subset \mathscr{M}_q$ admits an anticonformal involution whose fixed point set consists of  points representing real Riemann surfaces.

Real Riemann surfaces have been extensively studied; see, for example, \cite{libroemilio}.
 The following result shows that among the Riemann surfaces $S_{\lambda}$ in $\mathcal{F}_q,$ the real ones can be easily recognized according to the value of $\lambda.$ More precisely:

\begin{theo} \label{lreal} Let $\lambda \in \Omega$. Then the following statements are equivalent:
\begin{enumerate}
\item[(a)] $S_{\lambda}$ is a real Riemann surface.
\item[(b)] $JS_{\lambda}$ is a real algebraic variety.
\item[(c)] $\lambda \in \{ \bar{\lambda}, 1-\bar{\lambda}, 1/\bar{\lambda}, \bar{\lambda}/(1-\bar{\lambda})\}$
\end{enumerate}
\end{theo}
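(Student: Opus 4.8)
===

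The plan is to establish the chain of equivalences by first reducing everything to the arithmetic of the parameter $\lambda$, and then showing that the Riemann surface and its Jacobian are real under exactly the same condition. I would begin with the implication (c) $\Rightarrow$ (a). Recall from Theorem \ref{modelo} that $S_\lambda$ is the normalization of $y^2 = x(x^{2q} + 2\tfrac{1+\lambda}{1-\lambda}x^q + 1)$, and that the branch values of $\pi_G$ are $\infty, 0, 1$ (marked with $2$) and $\lambda$ (marked with $2q$). The key observation is that an anticonformal involution of $S_\lambda$ descends to an anticonformal automorphism of $\mathbb{P}^1 \cong S_\lambda/G$ that must permute the three branch values marked with $2$ among themselves and fix (setwise) the value $\lambda$. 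The group of M\"obius and anti-M\"obius transformations fixing $\{\infty, 0, 1\}$ setwise is generated by complex conjugation together with the $\mathbf{S}_3$-action permuting $\{0,1,\infty\}$ via the standard cross-ratio transformations $z \mapsto 1-z$, $z\mapsto 1/z$, etc. The orbit of $\lambda$ under the combined action of conjugation and this $\mathbf{S}_3$ produces precisely the six cross-ratio values of $\bar\lambda$; the four listed in (c) are those obtained by composing conjugation with the transformations that, together with their fixing of $\lambda$, yield a genuine anticonformal symmetry. So I would verify that each of the four conditions in (c) supplies an explicit anticonformal transformation of $\mathbb{P}^1$ preserving the marked branch data, and argue via the Riemann existence theorem (the uniqueness of the topological action, \cite[Remark 8]{costa}) that such a symmetry lifts to an anticonformal involution of $S_\lambda$.

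Next I would prove (a) $\Rightarrow$ (c). If $S_\lambda$ admits an anticonformal involution $\phi$, then $\phi$ normalizes the full automorphism group $G$ (since $G=\Aut(S_\lambda)$ is characteristic in $\Aut^{\pm}(S_\lambda)$), hence induces an anticonformal automorphism $\bar\phi$ of the quotient $\mathbb{P}^1$. This $\bar\phi$ must preserve the branch locus together with its markings: it permutes $\{\infty,0,1\}$ and fixes $\lambda$. Writing $\bar\phi$ as the composition of complex conjugation with a M\"obius transformation permuting $\{0,1,\infty\}$, the condition $\bar\phi(\lambda)=\lambda$ (as the unique point marked with $2q$) forces $\lambda$ to equal one of the six cross-ratio images of $\bar\lambda$; checking which of these are compatible with an \emph{order-two} anticonformal lift isolates exactly the four values in (c). This is the step I expect to be the main obstacle, because one must be careful that only those transformations whose composition with conjugation has the right order actually lift to an anticonformal \emph{involution}, and that the exclusion of the exceptional parameters in $\Omega$ guarantees $\Aut(S_\lambda)=G$ so that no larger symmetry group interferes with the descent argument.

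Finally I would close the loop through the Jacobian. The implication (a) $\Rightarrow$ (b) is essentially functorial: an anticonformal involution of $S_\lambda$ induces, via the functoriality of the Jacobian construction and Torelli's theorem, an anti-holomorphic automorphism of $JS_\lambda$ as a ppav, which is exactly the statement that $JS_\lambda$ is a real abelian variety. For the converse (b) $\Rightarrow$ (a), I would invoke Torelli's theorem in the refined form: a real structure on the principally polarized Jacobian $JS_\lambda$ corresponds to a real structure on the curve $S_\lambda$, so that $JS_\lambda$ real forces $S_\lambda$ real. Combining (c) $\Rightarrow$ (a) $\Rightarrow$ (b) and (b) $\Rightarrow$ (a) $\Rightarrow$ (c) then yields the full equivalence. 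Throughout, the decisive structural input is that the quotient $S_\lambda/G$ has genus zero with its branch data completely controlling the conformal and anticonformal symmetries, which reduces the whole problem to the elementary but delicate cross-ratio bookkeeping of step two.
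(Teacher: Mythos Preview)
Your approach is essentially the same as the paper's: the equivalence (a)$\Leftrightarrow$(b) is dispatched by the standard fact that a curve and its Jacobian share fields of definition (the paper cites \cite[Theorem 1.1]{Milne}, which packages your Torelli argument), and (a)$\Leftrightarrow$(c) is obtained by descending an anticonformal involution of $S_\lambda$ to the orbifold $S_\lambda/G\cong\mathbb{P}^1$ and analyzing how it permutes the branch data.

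One small clarification is worth making in your (a)$\Rightarrow$(c) step. You write that $\bar\phi(\lambda)=\lambda$ gives six cross-ratio possibilities and that ``checking which of these are compatible with an order-two anticonformal \emph{lift}'' cuts this down to four. But you are in the \emph{descent} direction here, not the lifting direction: since $\phi$ is an involution on $S_\lambda$, the induced map $\bar\phi$ on $\mathbb{P}^1$ is already an anticonformal \emph{involution}. Writing $\bar\phi = T\circ(z\mapsto\bar z)$ with $T$ a M\"obius transformation permuting $\{0,1,\infty\}$, the involution condition forces $T^2=\mathrm{id}$ (the coefficients of $T$ are real), so $T$ lies in the Klein four-subgroup of $\mathbf{S}_3\cup\{\mathrm{id}\}$, i.e.\ $T\in\{\mathrm{id},\,z\mapsto 1-z,\,z\mapsto 1/z,\,z\mapsto z/(z-1)\}$. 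This immediately yields the four conditions in (c) without any lifting analysis. The paper does exactly this enumeration; framing it as a lift-compatibility check is unnecessarily roundabout and could mislead a reader into thinking there is a genuine obstruction to sort out.
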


\begin{proof} The equivalence between the first two statements is well-known; indeed, following \cite[Theorem 1.1]{Milne}, a Riemann surface and its Jacobian variety can be defined over the same fields.

We now proceed to prove the equivalence between the statements (a) and (c). 

Let us assume that $S_{\lambda}$ is a real Riemann surface or, equivalently, that $S_{\lambda}$ admits an anticonformal involution, denoted by $f_{\lambda}:S_{\lambda} \to S_{\lambda}.$  It is clear that $f_{\lambda}Gf_{\lambda}^{-1}=G$ and therefore $f_{\lambda}$ gives rise to an anticonformal involution $g_{\lambda}: \mathcal{O}_{\lambda} \to  \mathcal{O}_{\lambda},$ where $\mathcal{O}_{\lambda}$ denotes the Riemann orbifold given by the action of $G$ on $S_{\lambda}.$

We recall that $\mathcal{O}_{\lambda}$ has genus zero and four marked points: $0, 1$ and $\infty$ marked with $2,$ and $\lambda$ marked with $2q.$ It follows that $g_{\lambda}$ is an extended M\"{o}bius transformation, i.e. $g_{\lambda}(z)=(a \bar{z}+b)/(c\bar{z}+d)$ with $a,b,c,d \in \mathbb{C}$ and $ad-bc \neq 0,$ satisfying $$g_{\lambda}(\lambda)=\lambda \hspace{0,3 cm} \mbox{and}  \hspace{0,3 cm} g_{\lambda}(\{\infty, 0, 1\})=\{\infty, 0, 1\}.$$

We only have four possibilities:
\begin{enumerate}
\item $g_{\lambda}$ fixes $\infty$ and permutes $0$ and $1.$ In this case $g_{\lambda}(z)=1-\bar{z}$ showing that $\lambda=1-\bar{\lambda}.$ 
\item $g_{\lambda}$ fixes $0$ and permutes $1$ and $\infty.$ In this case $g_{\lambda}(z)=\tfrac{\bar{z}}{1-\bar{z}}$ showing that $\lambda=\bar{\lambda}/(1-\bar{\lambda}).$ 
\item $g_{\lambda}$ fixes $1$ and permutes $\infty$ and $0.$ In this case $g_{\lambda}(z)=\tfrac{1}{\bar{z}}$ showing that $\lambda=1/\bar{\lambda}.$ 
\item $g_{\lambda}$ fixes $\infty, 0$ and $1.$ In this case $g_{\lambda}(z)=\bar{z}$ showing that $\lambda=\bar{\lambda}.$ 
\end{enumerate}Hence, $\lambda$ is as in statement $(c).$

Conversely, if $\lambda$ is as in statement (c), to construct explicitly an anticonformal involution is an easy task, and the proof is done.\end{proof}

\begin{rema} \label{conocidos} Following \cite[Theorem 14]{costa}, the real Riemann surfaces in the family $\mathcal{F}_q$ form three (one-real-dimensional) arcs. In addition, in order to compactify the union of these arcs in the Deligne-Mumford compactification of $\mathscr{M}_g,$ it was proved that it is enough to add to $\mathcal{F}_q$ three points: these points representing two nodal Riemann surfaces, and the {\it Wiman surface} of type II (this is the unique compact Riemann surface of genus $q$ admitting an automorphism of order $4q$; see \cite{Wi}). 

\s

The aforementioned results were obtained by using Teichm\"{u}ller theory and Fuchsian groups, among other techniques. Here, by considering the algebraic description of the Riemann surfaces in $\mathcal{F}_q$ given in Theorem \ref{modelo}, we are able to recover partly these results in a very explicit way as follows.

\s

The Riemann surfaces $S_{\lambda_1}$ and $S_{\lambda_2}$ are isomorphic if and only if $\lambda_2=T(\lambda_1)$ for some \begin{equation} \label{sim}T \in \mathbb{G}= \langle z \mapsto \tfrac{1}{z}, z \mapsto \tfrac{1}{1-z} \rangle \cong \mathbf{S}_3.\end{equation}  Observe that for the exceptional values $-1, \tfrac{1}{2}, 2, \gamma$ and $\gamma^2$ where $\gamma^{3}=-1,$ the Riemann surface $S_{\lambda}$ has more than $4q$ automorphisms.

Thus, the family $\mathcal{F}_q$ is isomorphic to the quotient of the parameter space $$\Omega=\mathbb{C}-\{0, \pm 1, \tfrac{1}{2}, 2, \gamma, \gamma^2 \}$$up to the action of $\mathbb{G}.$ Namely: $\Omega \to \Omega/\mathbb{G} \cong \mathcal{F}_q \cong \mathbb{C}-\{0,1\}.$

According to Theorem \ref{lreal}, the complex numbers $\lambda \in \Omega$ representing Riemann surfaces $S_{\lambda}$ which are real can be represented in the diagram below; the colored red points represent Riemann surfaces with more than $4q$ automorphisms (and therefore they do not belong to $\mathcal{F}_q$).
\begin{figure}[htp]
\centering
\includegraphics[width=4cm]{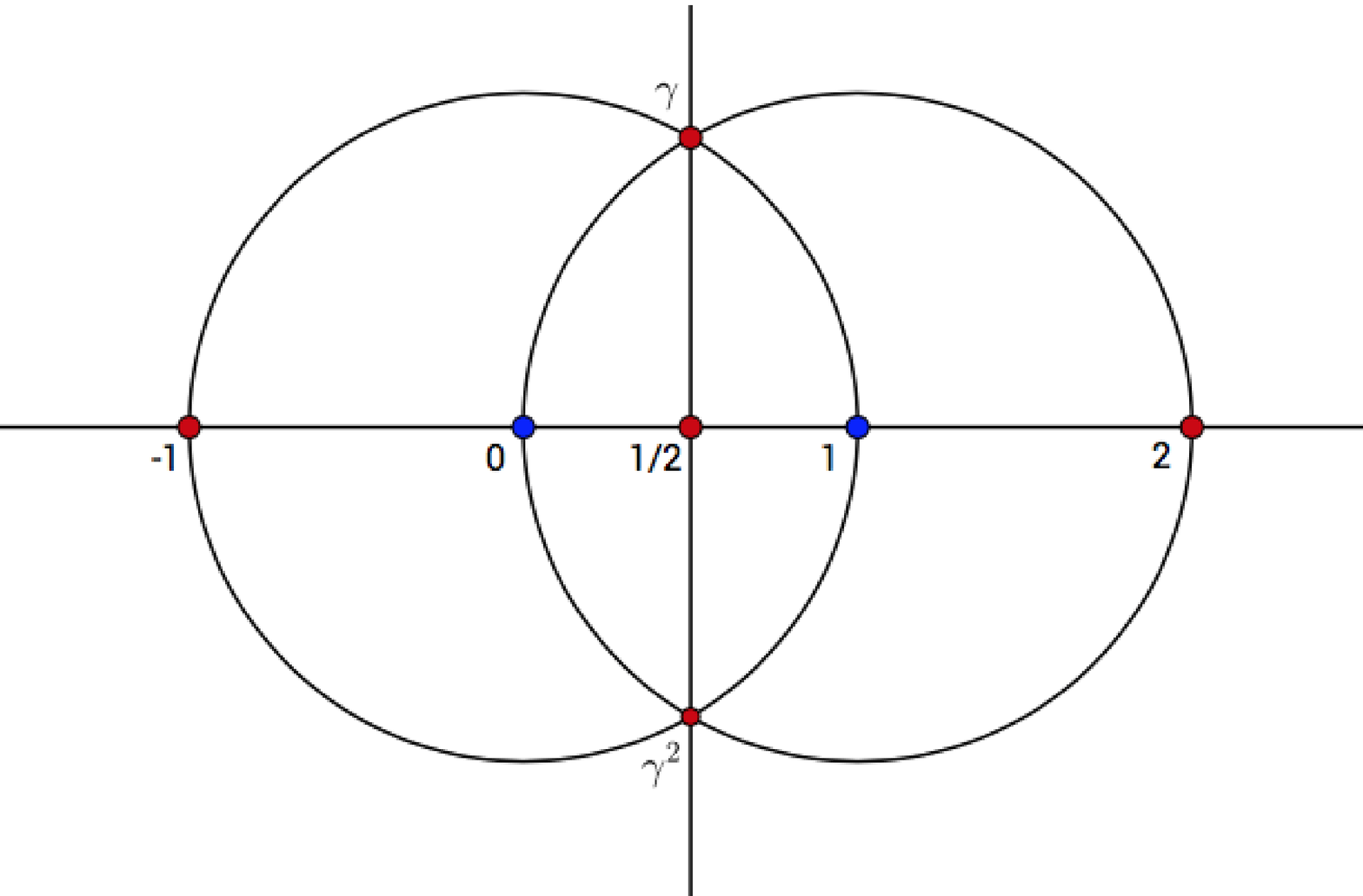}
\end{figure}

Note that a fundamental region for the action of $\mathbb{G}$ on $\Omega$ is given by $$\{ z \in \mathbb{C} : |z| < 1, \mbox{Re}(z) < \tfrac{1}{2}\}$$and, consequently, the subsets of $\mathcal{F}_q$ given by$$\Pi(\{ e^{i \theta} : \pi < \theta < \tfrac{\pi}{2}\}), \, \, \Pi(\{ z : |z-1|=1, |z| < 1\}) \mbox{ and } \Pi(]-1,0[)$$are the three arcs in $\mathcal{F}_q$ (denoted in \cite{costa} by $a_2, a_1$ and $b$ respectively) corresponding to real Riemann surfaces in $\mathcal{F}_q.$

Note that the limit point of $\mathcal{F}_q$ which connects the arcs $a_2$ and $b$ correspond to $S_{-1}$ and therefore, by Theorem \ref{modelo}, can be algebraically described by  $$y^2=x(x^{2q}+1).$$

The map $(x,y) \mapsto (-\omega_{4q} x, \omega_{8q} y )$ where $\omega_t = \mbox{exp}(\tfrac{2 \pi i}{t}),$ induces an isomorphism between $S_{-1}$ and the curve $$y^2=x(x^{2q}-1);$$this is the Wiman surface of type II. 
\end{rema}
%
%
%

\begin{rema}
After proving that an algebraic variety is real, to find explicit defining equations with real coefficients is, in general, a difficult problem. If $\lambda$ is real then a model for $S_{\lambda}$ in terms of equations with real coefficients is provided by Theorem \ref{modelo}. In the remaining cases, the construction of real equations can be done by applying the results of \cite{rubenyo}. 
\end{rema}

\subsection{Arithmetic Riemann surfaces} An algebraic variety is called {\it arithmetic} if it can be defined over a number field or, equivalently, over the algebraic closure $\overline{\mathbb{Q}}$ of the field of the rational numbers. 

A well-known result due to Belyi  ensures that a Riemann surface is arithmetic if and only if it admits a non-constant meromorphic function with three critical values; see \cite{Belyi}. For arithmetic complex surfaces an analogous result to Belyi's theorem was proved by
Gonz\'alez-Diez in \cite{lef} by considering the so-called Lefschetz maps. For the case of arithmetic  families of Riemann surfaces we refer to the articles \cite{th1} and \cite{th2}.

We mention that arithmetic Riemann surfaces (also known as {\it Belyi curves}) have attracted much
attention ever since Grothendieck noticed, in his famous
Esquisse d'un Programme, interesting relations between them and bipartite graphs embedded in
a topological surface; see \cite{Gro}.

\s

As in the case of real Riemann surfaces, arithmetic Riemann surfaces among the Riemann surfaces in the family $\mathcal{F}_q$ can be easily identified.
\begin{theo} \label{arit} Let $\lambda \in \Omega.$ Then the following statements are equivalent:
\begin{enumerate}
\item[(a)] $S_{\lambda}$ is an arithmetic Riemann surface.
\item[(b)] $JS_{\lambda}$ is an arithmetic algebraic variety.
\item[(c)] $\lambda$ is an algebraic complex number.
\end{enumerate}
\end{theo}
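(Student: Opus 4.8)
The plan is to establish the three equivalences by exploiting the explicit algebraic model of Theorem \ref{modelo} together with the general principle, already invoked in the proof of Theorem \ref{lreal}, that a Riemann surface and its Jacobian variety share the same fields of definition. Concretely, the equivalence $(a) \Leftrightarrow (b)$ is immediate from \cite[Theorem 1.1]{Milne}: since $S_\lambda$ can be defined over a number field precisely when $JS_\lambda$ can, and a number field is the same as a subfield of $\overline{\mathbb{Q}}$, the arithmeticity of $S_\lambda$ and of $JS_\lambda$ are equivalent. This reduces the whole theorem to proving $(a) \Leftrightarrow (c)$.

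For the implication $(c) \Rightarrow (a)$, I would argue directly from the model. If $\lambda \in \overline{\mathbb{Q}}$, then the coefficient $2\tfrac{1+\lambda}{1-\lambda}$ of the defining polynomial $y^2 = x(x^{2q} + 2\tfrac{1+\lambda}{1-\lambda}x^q + 1)$ is again an algebraic number, so the hyperelliptic curve is cut out by polynomials with coefficients in $\overline{\mathbb{Q}}$; hence $S_\lambda$ is defined over a number field and is arithmetic. One should note that all the constants appearing in the normalization (the $2q$-th roots of unity, etc.) are themselves algebraic, so no transcendental data enters the model once $\lambda$ is algebraic.

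The substantive direction is $(a) \Rightarrow (c)$, and this is where the main obstacle lies. The natural approach is to use Belyi's theorem: $S_\lambda$ is arithmetic if and only if it admits a meromorphic function with at most three critical values. The covering map $\pi_G \colon S_\lambda \to S_\lambda/G \cong \mathbb{P}^1$ has four branch values $\infty, 0, 1, \lambda$, so it is not directly a Belyi map; however, the intermediate quotient $\pi_2 \colon R_\lambda \to \mathbb{P}^1$ constructed in the proof of Theorem \ref{modelo} has only three branch values $\infty, 0, \lambda$, and composing with a Möbius transformation one obtains a three-critical-value function on $S_\lambda$ exactly when $\lambda$ is suitably positioned. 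The cleanest formulation is to observe that a generically branched cover of $\mathbb{P}^1$ marked at $\{0,1,\infty,\lambda\}$ is arithmetic if and only if the cross-ratio $\lambda$ of the branch locus is algebraic; thus if $S_\lambda$ is arithmetic, Belyi's criterion forces the fourth branch value $\lambda$ (which is intrinsically determined by the orbifold $\mathcal{O}_\lambda$, being the unique point marked with $2q$) to be an algebraic number. Making this last step rigorous — showing that arithmeticity of the total space pins down the position of the distinguished branch point up to the algebraic action of $\mathbb{G}$ of \eqref{sim} — is the delicate point, and I would handle it by noting that any field automorphism $\sigma \in \mathrm{Gal}(\mathbb{C}/\overline{\mathbb{Q}})$ acts on $\mathcal{F}_q$ compatibly with $h$, so that $S_\lambda^\sigma \cong S_{\sigma(\lambda)}$; arithmeticity means $S_\lambda^\sigma \cong S_\lambda$ for all such $\sigma$, which by the injectivity of $h$ modulo $\mathbb{G}$ yields $\sigma(\lambda) \in \mathbb{G}\cdot\lambda$ for all $\sigma$, and a standard specialization argument then shows this can hold for the full uncountable group $\mathrm{Gal}(\mathbb{C}/\overline{\mathbb{Q}})$ only when $\lambda$ is algebraic.
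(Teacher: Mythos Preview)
Your proof is correct. The easy parts --- $(a)\Leftrightarrow(b)$ via \cite[Theorem 1.1]{Milne} and $(c)\Rightarrow(a)$ via the explicit equation of Theorem~\ref{modelo} --- coincide with the paper's. For the substantive implication $(a)\Rightarrow(c)$ you take a genuinely different route. The paper picks an algebraic model $S'_\lambda$ over $\bar{\mathbb{Q}}$, invokes \cite[Proposition~3.3]{criterio1} to deduce that the automorphisms of $S'_\lambda$ and the quotient map $\pi_{G'_\lambda}$ are themselves defined over $\bar{\mathbb{Q}}$, so the four branch values $\mu_0,\mu_1,\mu_\infty,\mu_\lambda$ of $\pi_{G'_\lambda}$ are algebraic; the isomorphism $S_\lambda\cong S'_\lambda$ then induces a M\"obius transformation between the quotient orbifolds, exhibiting $\lambda$ explicitly as the image under some $T\in\mathbb{G}$ of a cross-ratio of the $\mu_i$, hence algebraic. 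You instead use the model of Theorem~\ref{modelo} to get $S_\lambda^\sigma\cong S_{\sigma(\lambda)}$ for every $\sigma\in\mathrm{Gal}(\mathbb{C}/\bar{\mathbb{Q}})$, combine this with the moduli statement of Remark~\ref{conocidos} to force $\sigma(\lambda)\in\mathbb{G}\cdot\lambda$, and conclude by the elementary fact that a transcendental number has infinite $\mathrm{Gal}(\mathbb{C}/\bar{\mathbb{Q}})$-orbit while $|\mathbb{G}\cdot\lambda|\le 6$. Your argument is more self-contained, avoiding the external input from \cite{criterio1}, but it relies on the closed-form equation and on the explicit parametrization of isomorphism classes; the paper's argument is more portable to situations where only the quotient-orbifold data, and not an explicit model, is available.

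One minor point: your digression through Belyi's theorem and the intermediate cover $\pi_2:R_\lambda\to\mathbb{P}^1$ does not lead anywhere --- the composite $\pi_2\circ\pi_1=\pi_G$ re-acquires the branch value $1$, so this does not produce a Belyi function on $S_\lambda$ --- and you rightly abandon it for the Galois-orbit argument. It would be cleaner simply to drop that paragraph.
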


\begin{proof} As in the proof of Theorem \ref{lreal}, the equivalence between the first two statements follows from \cite[Theorem 1.1]{Milne}.

 We denote by $\mathcal{O}_{\lambda}$ the Riemann orbifold given by the action of $G_{\lambda}$ on $S_{\lambda},$ and by $$\pi_{G_{\lambda}}: S_{\lambda} \to \mathcal{O}_{\lambda}$$ the associated covering map. 

Let us assume that $S_{\lambda}$ is arithmetic. Then there exists an algebraic model $S'_{\lambda}$ of $S_{\lambda}$ defined by equations whose coefficients belong to the field of the algebraic numbers. Let us denote by $G'_{\lambda}$ the automorphism group of $S'_{\lambda},$ by $\mathcal{O}'_{\lambda}$ the Riemann orbifold given by the action of $G'_{\lambda}$ on $S'_{\lambda},$ and by $\pi_{G'_{\lambda}}$ the associated covering map.

As a consequence of \cite[Proposition 3.3]{criterio1}, both each element of $G'_{\lambda}$ and the projection $\pi_{G'_{\lambda}}$ are algebraic (i.e. defined over $\bar{\mathbb{Q}}).$ In particular, the branch values of $\pi_{G'_{\lambda}}$ are also algebraic. Let $\mu_0, \mu_1, \mu_{\infty}$ and $\mu_{\lambda}$ denote these values, where $\mu_0, \mu_1, \mu_{\infty}$ are marked with $2$ and $\mu_{\lambda}$ is marked with $2q.$

Now, the existence of an isomorphism $f_{\lambda}: S_{\lambda} \to S'_{\lambda}$ guarantees the existence of an isomorphism $g_{\lambda}: \mathcal{O}_{\lambda} \to \mathcal{O}'_{\lambda}$ such that $\pi_{G'_{\lambda}} \circ f_{\lambda}=g_{\lambda} \circ \pi_{G_{\lambda}}.$ It follows that $g_{\lambda}$ is a M\"{o}bius transformation satisfying that  $$g_{\lambda}(\mu_{\lambda})=\lambda \hspace{0,3 cm} \mbox{and}  \hspace{0,3 cm} g_{\lambda}(\{\mu_{\infty}, \mu_0, \mu_1\})=\{\infty, 0, 1\}.$$

Thus, $$g_{\lambda}(z)=T \left( \tfrac{(\mu_1-\mu_{\infty})(z-\mu_0)}{(t_1-t_0)(z-\mu_{\infty})}\right)$$ for some $T \in \mathbb{G}$ as in \eqref{sim}, and therefore$$\lambda = T \left( \tfrac{(\mu_1-\mu_{\infty})(\mu_{\lambda}-\mu_0)}{(\mu_1-\mu_0)(\mu_{\lambda}-\mu_{\infty})} \right).$$

Finally, as each $T \in \mathbb{G}$ is defined over $\mathbb{Q}$ and the points $\mu_0, \mu_1, \mu_{\infty},\mu_{\lambda}$ are algebraic, we are in position to conclude that the complex number $\lambda$ must be algebraic.

The converse follows directly from Theorem \ref{modelo}, and the proof is done.
\end{proof}

\begin{coro} \label{aritdos} Let $\lambda \in \Omega.$  Then $JS_{\lambda}$ is an arithmetic algebraic variety admitting a group algebra decomposition in which each factor is arithmetic as well.
\end{coro}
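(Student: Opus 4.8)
The plan is to combine Theorem \ref{arit} with Theorem \ref{theo2}, so that arithmeticity of $JS_{\lambda}$ is inherited factor-by-factor by the pieces of its group algebra decomposition. First I would invoke Theorem \ref{arit}: since $\lambda \in \Omega$ is assumed algebraic (this is the hypothesis of interest; the corollary is vacuous otherwise, so implicitly we take $\lambda \in \overline{\mathbb{Q}}$), the surface $S_{\lambda}$ is arithmetic and hence $JS_{\lambda}$ is an arithmetic algebraic variety. Recall from Theorem \ref{theo2} the $G$-equivariant isogeny decomposition $JS_{\lambda} \sim_G JE_{\lambda} \times JC_{\lambda}^2$, where $E_{\lambda}$ and $C_{\lambda}$ are the quotients of $S_{\lambda}$ by $H_4=\langle r^{-2}, sr^{-1}\rangle$ and $H_5=\langle s \rangle$ respectively.

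The key observation is that each factor is itself the Jacobian of a \emph{quotient} of $S_{\lambda}$, so it suffices to show that these quotient surfaces $E_{\lambda}$ and $C_{\lambda}$ are again arithmetic. For this I would argue that, because $\lambda$ is algebraic, the algebraic model $y^2=x(x^{2q}+2\tfrac{1+\lambda}{1-\lambda}x^q+1)$ of Theorem \ref{modelo} is defined over $\overline{\mathbb{Q}}$, and the explicit automorphisms $r(x,y)=(\omega_q x, \omega_{2q} y)$ and $s(x,y)=(\tfrac{1}{x}, \tfrac{y}{x^{q+1}})$ have algebraic coefficients as well, since $\omega_q,\omega_{2q}\in\overline{\mathbb{Q}}$. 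Consequently every subgroup $H\le G$ acts by automorphisms defined over $\overline{\mathbb{Q}}$, and the quotient $S_{\lambda}/H$ is a curve admitting a model over $\overline{\mathbb{Q}}$; equivalently, one checks directly that $E_{\lambda}$ is the arithmetic elliptic curve $y^2=x(x-1)(x-\lambda)$ of Proposition \ref{eli} (arithmetic because $\lambda$ is algebraic), while $C_{\lambda}$ is an explicit $2q$-gonal curve of genus $\tfrac{q-1}{2}$ whose defining equation again has algebraic coefficients. By Belyi's theorem, or simply by exhibiting these models, both $E_{\lambda}$ and $C_{\lambda}$ are arithmetic, so $JE_{\lambda}$ and $JC_{\lambda}$ are arithmetic abelian varieties by \cite[Theorem 1.1]{Milne}.

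Putting this together, the group algebra decomposition $JS_{\lambda}\sim_G JE_{\lambda}\times JC_{\lambda}^2$ realizes $JS_{\lambda}$ as a product of arithmetic abelian varieties, with the isogeny itself defined over $\overline{\mathbb{Q}}$ since it arises from the algebraic $G$-action; this establishes the claim. The main obstacle, and the step deserving care, is justifying that the factors appearing in the \emph{group algebra} decomposition can be taken to be defined over $\overline{\mathbb{Q}}$ rather than merely isogenous over $\mathbb{C}$ to arithmetic varieties: one must ensure that the idempotents $e_i, f_{ij}\in\mathbb{Q}[G]$ producing the factors, together with the induced projections, descend to $\overline{\mathbb{Q}}$. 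This follows because the $G$-action on $JS_{\lambda}$ is induced by automorphisms of the arithmetic curve $S_{\lambda}$ (hence defined over $\overline{\mathbb{Q}}$), so the homomorphism $\Phi:\mathbb{Q}[G]\to\mathrm{End}_{\mathbb{Q}}(JS_{\lambda})$ lands in endomorphisms defined over $\overline{\mathbb{Q}}$, and therefore each $A_{e_i}$ and each $B_i$ inherits a model over $\overline{\mathbb{Q}}$. With this point secured, the identification of $B_4$ and $B_5$ as $JE_{\lambda}$ and $JC_{\lambda}$ from Theorem \ref{theo2} completes the argument.
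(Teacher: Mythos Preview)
Your argument is correct and follows essentially the same strategy as the paper: use Theorem~\ref{arit} to get that $S_{\lambda}$ (and hence $JS_{\lambda}$) is arithmetic when $\lambda\in\overline{\mathbb{Q}}$, then invoke Theorem~\ref{theo2} to identify the group algebra factors with the Jacobians of the quotients $E_{\lambda}$ and $C_{\lambda}$, and finally check that these quotient curves are themselves arithmetic.

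The only methodological difference is in this last step. The paper dispatches it in one line by quoting a general result of Gonz\'alez-Diez \cite[Theorem~4.4]{criterio1}: if $S$ is an arithmetic Riemann surface, then any Riemann surface covered by $S$ is arithmetic as well. This immediately applies to $E_{\lambda}=S_{\lambda}/H_{4}$ and $C_{\lambda}=S_{\lambda}/H_{5}$ without needing to exhibit models or analyze the automorphisms. You instead argue directly from the explicit equation of Theorem~\ref{modelo} and the explicit generators $r,s$, observing that everything is defined over $\overline{\mathbb{Q}}$ so the quotients and the idempotent projections descend. That is perfectly valid, and your extra care about the idempotents $e_i,f_{ij}$ landing in $\mathrm{End}_{\overline{\mathbb{Q}}}(JS_{\lambda})$ is a nice touch the paper does not spell out. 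The trade-off is that the paper's route is shorter and manifestly independent of any particular model, while yours is more self-contained but relies on the explicit descriptions developed earlier in the paper (and for $C_{\lambda}$ you assert the existence of an algebraic model without displaying it; the general covering argument, or the cited theorem, is what actually closes that gap cleanly).
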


\begin{proof} Following \cite[Theorem 4.4]{criterio1},  if $S$ is an arithmetic Riemann surface then any Riemann surface $S'$ for which there is a covering map $S \to S'$ is arithmetic as well. Thus, the result follows directly from Theorems \ref{theo2} and \ref{arit}. \end{proof}

\begin{rema} \mbox{}
\begin{enumerate}
\item[(a)] It is worth observing that Theorem \ref{arit} and Corollary \ref{aritdos} can be easily generalized from $\bar{\mathbb{Q}}$ to any algebraically closed subfield $k$ of the field of the complex numbers. 
\item[(b)] In addition, Corollary \ref{aritdos} can also be generalized from each $S_{\lambda}$ in $\mathcal{F}_q$ to any Riemann surface $S$ defined over $k$ whose Jacobian variety admit a group algebra decomposition in which every factor is isogenous to the Jacobian of a quotient of $S.$
\end{enumerate}
\end{rema}

\subsection{Riemann surfaces defined over the field of moduli}
The {\it field of moduli} $\mathcal{M}(S)$ of a compact Riemann surface $S$ is by definition the fixed field of the group $$\mathbb{I}({S})=\{\sigma \in \mbox{Gal}(\mathbb{C}): S^{\sigma} \cong S\}.$$ 

\begin{prop}
Let $\lambda \in \Omega.$ Then $$\mathbb{Q}(j(\lambda)) \le \mathcal{M}(S) \le \mathbb{Q}(\lambda)$$where $j$ denotes the invariant function for elliptic curves, in the Legendre form.
\end{prop}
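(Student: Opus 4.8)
The plan is to establish the two inclusions separately, exploiting the explicit dependence of $S_\lambda$ on $\lambda$ together with the classification of isomorphisms among the surfaces $S_\lambda$ recorded in Remark \ref{conocidos}. Throughout I will use that $S_{\lambda_1}\cong S_{\lambda_2}$ if and only if $\lambda_2=T(\lambda_1)$ for some $T$ in the group $\mathbb{G}\cong\mathbf{S}_3$ of \eqref{sim}, and that $j(\lambda)$ is invariant precisely under this $\mathbb{G}$-action, being the classical modular invariant attached to the elliptic curve $y^2=x(x-1)(x-\lambda)=E_\lambda$ of Proposition \ref{eli}.

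For the upper bound $\mathcal{M}(S)\le\mathbb{Q}(\lambda)$, the plan is to show that $\mathbb{Q}(\lambda)$ is a field of definition for $S_\lambda$ in the weak sense that $\mathrm{Gal}(\mathbb{C}/\mathbb{Q}(\lambda))\subseteq\mathbb{I}(S_\lambda)$. Indeed, for any $\sigma\in\mathrm{Gal}(\mathbb{C})$ fixing $\lambda$, applying $\sigma$ to the coefficients of the defining equation $y^2=x(x^{2q}+2\tfrac{1+\lambda}{1-\lambda}x^q+1)$ from Theorem \ref{modelo} leaves the equation invariant, since the only non-rational coefficient is the algebraic expression $2\tfrac{1+\lambda}{1-\lambda}$, which is fixed by $\sigma$. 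Hence $S_\lambda^\sigma$ is defined by the same equation and so $S_\lambda^\sigma\cong S_\lambda$, giving $\sigma\in\mathbb{I}(S_\lambda)$. Taking fixed fields reverses the inclusion and yields $\mathcal{M}(S)\le\mathbb{Q}(\lambda)$.

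For the lower bound $\mathbb{Q}(j(\lambda))\le\mathcal{M}(S)$, the strategy is to show $\mathbb{I}(S_\lambda)\subseteq\mathrm{Gal}(\mathbb{C}/\mathbb{Q}(j(\lambda)))$, i.e.\ every $\sigma\in\mathbb{I}(S_\lambda)$ fixes $j(\lambda)$. Fix such a $\sigma$. Applying $\sigma$ to the model of Theorem \ref{modelo} shows that $S_\lambda^\sigma$ is isomorphic to $S_{\sigma(\lambda)}$, because conjugating the parameter $2\tfrac{1+\lambda}{1-\lambda}$ by $\sigma$ produces the analogous parameter for $\sigma(\lambda)$. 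Since $\sigma\in\mathbb{I}(S_\lambda)$ we have $S_{\sigma(\lambda)}\cong S_\lambda^\sigma\cong S_\lambda$, and the isomorphism criterion of Remark \ref{conocidos} forces $\sigma(\lambda)=T(\lambda)$ for some $T\in\mathbb{G}$. The elliptic curve $E_\lambda$ from Proposition \ref{eli} is the canonically associated quotient $S_\lambda/H_4$, so its $j$-invariant $j(\lambda)$ is determined by the isomorphism class of $S_\lambda$ and is $\mathbb{G}$-invariant by the classical theory of the Legendre family. Therefore $\sigma(j(\lambda))=j(\sigma(\lambda))=j(T(\lambda))=j(\lambda)$, so $\sigma$ fixes $j(\lambda)$, and passing to fixed fields gives $\mathbb{Q}(j(\lambda))\le\mathcal{M}(S)$.

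The main obstacle I anticipate is making precise the step $S_\lambda^\sigma\cong S_{\sigma(\lambda)}$ and, within it, confirming that applying a field automorphism $\sigma$ to the hyperelliptic model genuinely corresponds to replacing $\lambda$ by $\sigma(\lambda)$ at the level of the branch data of the canonical quotient orbifold; this requires knowing that the identification of $\lambda$ as a cross-ratio of branch values is Galois-equivariant, which can be extracted from the same orbifold analysis used in the proof of Theorem \ref{arit}. The remaining verifications---that $j(\lambda)$ is $\mathbb{G}$-invariant and equals the $j$-invariant of $E_\lambda$---are standard facts about the Legendre form and require no new input.
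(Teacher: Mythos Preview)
Your proposal is correct and follows essentially the same approach as the paper's proof. Both arguments obtain the upper bound from the explicit model of Theorem \ref{modelo} (since $\mathbb{Q}(\lambda)$ is visibly a field of definition, hence contains the field of moduli), and both obtain the lower bound by showing that any $\sigma\in\mathbb{I}(S_\lambda)$ fixes $j(\lambda)$, using $(S_\lambda)^\sigma\cong S_{\sigma(\lambda)}$ and the rationality of $j$. The only minor difference is that for the step $j(\lambda)=j(\sigma(\lambda))$ the paper argues that the isomorphism $S_\lambda\to S_{\sigma(\lambda)}$ descends to an isomorphism $E_\lambda\to E_{\sigma(\lambda)}$ of the quotient elliptic curves, whereas you instead invoke the classification of Remark \ref{conocidos} to write $\sigma(\lambda)=T(\lambda)$ with $T\in\mathbb{G}$ and then use the $\mathbb{G}$-invariance of $j$; these are equivalent formulations of the same fact.
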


\begin{proof} We recall that, as a consequence of Theorem \ref{modelo} and Proposition \ref{eli}, $$(S_{\lambda})^{\sigma}=S_{\sigma(\lambda)} \hspace{0,3 cm} \mbox{and} \hspace{0,3 cm} (E_{\lambda})^{\sigma} = E_{\sigma(\lambda)}$$for all $\sigma \in \mbox{Gal}(\mathbb{C}),$ where $E_{\lambda}=S_{\lambda}/ \langle r^{-2}, sr^{-1} \rangle.$ 

 Now, if $\sigma \in \mathbb{I}({S})$ then there is an isomorphism $S_{\lambda} \to S_{\sigma(\lambda)}$ which induces an isomorphism $E_{\lambda} \to  E_{\sigma(\lambda)}.$ In particular, $$j(\lambda)=j(\sigma(\lambda))=\sigma(j(\lambda))$$showing that $\sigma \in \mbox{Gal}(\mathbb{C}/\mathbb{Q}(j(\lambda)));$ it follows that $\mathbb{Q}(j(\lambda)) \le \mathcal{M}(S)$. 

The other inclusion follows from Theorem \ref{modelo}, and from the fact that the field of moduli is contained in every field of definition. The proof is done.
\end{proof}

Weil in \cite{Weil} provided necessary conditions for $S$ to admit its field of moduli as a field of definition; these conditions hold trivially if $S$ does not have non-trivial automorphisms. On the other extreme, following \cite{Wolfart}, if $S/{\rm Aut}(S)$ is an orbifold with signature of type $(a,b,c)$ then $S$ can be defined over its field of moduli. 

By results of D\`ebes-Emsalem \cite{DE} (see also Hammer-Herrlich \cite{HH}) there is a field of definition of $S$ which is an extension of finite degree of its field of moduli.

\s

In general, the determination of whether the field of moduli is a field of definition is a difficult task; see, for example \cite{Earle}, \cite{Hid}, \cite{yo1}, \cite{yo2} and \cite{Shimura}. By contrast, in the hyperelliptic case it is possible to decide, in a very simple way, if the field of moduli is a field of definition. In fact, following \cite{hugg}, if the reduced automorphism group of a hyperelliptic Riemann surface is not cyclic, then it can be defined over its field of moduli. It follows immediately the following:

\begin{prop}  Let $\lambda \in \Omega.$ 
The field of moduli of $S_{\lambda}$ is a field of definition for $S_{\lambda}$ and for $JS_{\lambda}.$  
\end{prop}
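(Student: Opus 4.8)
The plan is to invoke the hyperelliptic criterion of Huggins \cite{hugg}, which asserts that a hyperelliptic Riemann surface whose reduced automorphism group is \emph{not} cyclic can be defined over its field of moduli. Since Theorem \ref{modelo} already exhibits $S_{\lambda}$ as a hyperelliptic curve, the entire proof reduces to computing the reduced automorphism group and checking that it fails to be cyclic. First I would recall that the reduced automorphism group is by definition the quotient $\mathrm{Aut}(S_{\lambda})/\langle \iota \rangle$, where $\iota$ is the hyperelliptic involution. We have $\mathrm{Aut}(S_{\lambda}) = G \cong \mathbf{D}_{2q}$, and from the proof of Theorem \ref{modelo} the hyperelliptic involution is $\iota = r^{q}$, which is precisely the central element of $G$.

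Next I would identify the quotient explicitly. Since $\langle r^{q} \rangle$ is the center of $\mathbf{D}_{2q}$, the reduced automorphism group is $G/\langle r^{q}\rangle \cong \mathbf{D}_{q}$, the dihedral group of order $2q$; this is exactly the group $K$ that already appeared in the proof of Theorem \ref{modelo}. As $q \ge 5$ is prime, $\mathbf{D}_{q}$ has order $2q \ge 10$ and is visibly non-abelian, hence in particular non-cyclic. This is the one point that must be verified, and it is immediate. Applying Huggins' criterion then yields that $S_{\lambda}$ can be defined over its field of moduli $\mathcal{M}(S_{\lambda})$.

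Finally, to pass from $S_{\lambda}$ to its Jacobian $JS_{\lambda}$, I would appeal once more to \cite[Theorem 1.1]{Milne}, already used in the proofs of Theorems \ref{lreal} and \ref{arit}, which guarantees that a Riemann surface and its Jacobian variety share the same fields of definition. Consequently $JS_{\lambda}$ is likewise defined over $\mathcal{M}(S_{\lambda})$, completing the proof. I do not anticipate a genuine obstacle here: the work is entirely in correctly identifying the reduced automorphism group as $\mathbf{D}_{q}$ and recording that it is non-cyclic, after which both the surface statement and the Jacobian statement follow from cited results.
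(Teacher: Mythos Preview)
Your proposal is correct and follows exactly the paper's approach: the paper simply notes that Huggins' criterion \cite{hugg} applies because the reduced automorphism group of the hyperelliptic surface $S_{\lambda}$ is not cyclic, and declares the proposition as an immediate consequence. You have merely made explicit the identification of the reduced automorphism group as $\mathbf{D}_{q}$ and the passage to $JS_{\lambda}$ via \cite[Theorem 1.1]{Milne}, both of which are implicit in the paper's one-line justification.
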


\section{A three-dimensional family of ppavs with $\mathbf{D}_{10}$-action} \label{s6}

Let $S$ be a compact Riemann surface of genus $g \ge 2,$ and let $$JS=(\mathscr{H}^{1,0}(S, \mathbb{C}))^*/H_{1}(S, \mathbb{Z})$$be its Jacobian variety. We recall that, after fixing a symplectic basis of $H_{1}(S, \mathbb{Z})$, both a period matrix $(I_g \, Z_S)$ with $Z_S \in \mathscr{H}_g$ for $JS,$ and a rational representation of $L_S:=\mbox{End}_{\mathbb{Q}}(JS)$ are determined, up to equivalence. 

If $S$ is hyperelliptic, then the symplectic representation  $$\rho_r : G \to \mbox{Sp}(2g, \mathbb{Z})$$of the automorphism group $G$ of $S$ induces an isomorphism $$G \cong \mathcal{G}:= \{ R \in \mbox{Sp}(2g, \mathbb{Z}): R \cdot Z_S= Z_S\}.$$

We can now consider the complex submanifold of $\mathscr{H}_g$ $$\mathscr{H}_g(G)=\{ Z \in \mathscr{H}_g : R \cdot Z = Z \mbox{ for all } R \in \mathcal{G} \}$$consisting of those period matrices $Z$ representing ppavs of dimension $g$ admitting the given action of $G.$ Clearly, $Z_S \in \mathscr{H}_g(G).$

\s

In the case of the action of $\mathbf{D}_{10}$ on the Riemann surfaces in family $\mathcal{F}_5,$ we can be much more explicit.

\begin{theo} \label{invat}Consider the action of $\mathbf{D}_{10}$  with generating vector $\sigma_0.$  

There exists a three-dimensional family $\mathcal{A}_5(\mathbf{D}_{10})$ of principally polarized abelian varieties of dimension five admitting the
given group action; it is given by the period matrices in $\mathscr{H}_5$ of the following form:
\begin{equation} \label{inva}
\left(\begin{smallmatrix}
2(u + v + u)   &  -w - u                      & -2v              &  -v - w - u    &  -v + u  \\
-w - u                   & -v - \tfrac{1}{2}w+\tfrac{5}{4}u     & v-\tfrac{1}{2}u      &  w + \tfrac{1}{2}u       &  v - u     \\
-2v                          & v - \tfrac{1}{2}u                   &  u                 &  v                      & w  \\
-v - w - u         & w + \tfrac{1}{2}u                  & v                  &  u                      &  -w  \\
-v +u                 & v - u                        & w                   &  - w                &    2(u-v-w)
\end{smallmatrix}\right) 
\end{equation}for complex numbers $u, v$ and $w.$

Furthermore, $\mathcal{A}_5({\mathbf{D}_{10}})$ contains the one-dimensional family $\mathcal{F}_5.$ 
\end{theo}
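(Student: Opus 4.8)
The plan is to determine the complex submanifold $\mathscr{H}_5(\mathbf{D}_{10}) \subset \mathscr{H}_5$ explicitly by imposing the fixed-point condition $R \cdot Z = Z$ for the symplectic representation of $\mathbf{D}_{10}$, and to then verify that this family contains $\mathcal{F}_5$. First I would compute the rational (equivalently, symplectic) representation $\rho_r : \mathbf{D}_{10} \to \mbox{Sp}(10, \mathbb{Z})$ attached to the action determined by $\sigma_0 = (s, sr^{-2}, r^q, r^{q+2})$ with $q=5$. Since the relevant data is the group algebra decomposition, I expect to read off $\rho_r$ from the rational irreducible representations $W_i$ of $G$ computed in the earlier Lemma; concretely, $\rho_r$ decomposes according to $JS_{\lambda} \sim_G B_4 \times B_5^2$, so only $W_4$ (degree $1$) and $W_5$ (degree $q-1 = 4$) occur with the appropriate multiplicities, yielding a $10 \times 10$ integral symplectic matrix for each of the generators $r$ and $s$.

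The core step is purely linear algebra: for $Z \in \mathscr{H}_5$ with $\rho_r(g) = \left(\begin{smallmatrix} A_g & B_g \\ C_g & D_g \end{smallmatrix}\right)$, the fixed-point condition $\rho_r(g) \cdot Z = Z$ reads $(A_g + Z C_g)^{-1}(B_g + Z D_g) = Z$, i.e. $A_g Z + Z C_g Z = B_g + Z D_g$ — equivalently $Z C_g Z + A_g Z - Z D_g - B_g = 0$ as a system of quadratic equations in the entries of the symmetric matrix $Z$. It suffices to impose this for the two generators $g \in \{r, s\}$. I would substitute a general symmetric $5 \times 5$ matrix $Z = (z_{ij})$, collect the resulting scalar equations, and solve them. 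The expectation, consistent with the asserted form \eqref{inva}, is that the solution space is parametrized by exactly three free complex parameters $u, v, w$, with all other entries forced to be prescribed $\mathbb{Q}$-linear combinations of these. This establishes that $\mathcal{A}_5(\mathbf{D}_{10})$ is three-dimensional and gives its explicit period matrices; one should also note that the positive-definiteness of $\mbox{Im}(Z)$ cuts out an open subset, so the family is genuinely a three-complex-dimensional analytic family.

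For the final clause — that $\mathcal{A}_5(\mathbf{D}_{10})$ contains $\mathcal{F}_5$ — I would argue that every Jacobian $JS_{\lambda}$ with $\lambda \in \Omega$ carries precisely the given $\mathbf{D}_{10}$-action, hence its period matrix $Z_{S_{\lambda}}$ satisfies $R \cdot Z_{S_{\lambda}} = Z_{S_{\lambda}}$ for all $R \in \mathcal{G}$ and therefore lies in $\mathscr{H}_5(\mathbf{D}_{10})$. Passing to the quotient by $\mbox{Sp}(10,\mathbb{Z})$, the one-dimensional family $\mathcal{F}_5 = \{JS_{\lambda} : \lambda \in \Omega\}$ embeds into $\mathcal{A}_5(\mathbf{D}_{10})$. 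This inclusion is necessarily one-dimensional inside a three-dimensional family, reflecting the fact that generic members of $\mathcal{A}_5(\mathbf{D}_{10})$ carry the $\mathbf{D}_{10}$-action but need not be Jacobians of curves in $\mathcal{F}_5$.

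The main obstacle I anticipate is twofold. The first difficulty is producing the correct integral symplectic generators $\rho_r(r)$ and $\rho_r(s)$: the decomposition into $W_4 \oplus W_5^{\oplus 2}$ determines $\rho_r$ only up to conjugacy and choice of symplectic basis, and writing down generators that genuinely preserve the standard form $J$ (so that the resulting $Z$ is automatically in Siegel space with a principal polarization) requires care. The second, and more delicate, point is verifying that the particular symmetric matrix displayed in \eqref{inva} — with its specific rational coefficients such as $-\tfrac{1}{2}w + \tfrac{5}{4}u$ — is indeed the general solution of the fixed-point equations and not merely a special slice; this amounts to checking that the quadratic system $Z C_g Z + A_g Z - Z D_g - B_g = 0$ has solution set of exactly the asserted affine-linear shape in $(u,v,w)$. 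I would discharge this by direct substitution, which is routine but computationally heavy, and I would expect it to have been carried out with the aid of the SAGE routines acknowledged in the paper.
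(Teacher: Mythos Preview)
Your overall strategy matches the paper's: obtain the symplectic representation of $\mathbf{D}_{10}$, impose the fixed-point condition on $\mathscr{H}_5$, and solve. The containment $\mathcal{F}_5 \subset \mathcal{A}_5(\mathbf{D}_{10})$ is argued exactly as you propose, and you are right that the final computation is delegated to SAGE.

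The substantive divergence is in how the integral symplectic generators are produced. You propose to ``read off $\rho_r$'' from the decomposition of the rational representation into $W_4$ and $W_5$; as you yourself note, this determines $\rho_r$ only up to conjugacy and gives no handle on an integral symplectic model. The paper does not attempt this. Instead it invokes the adapted hyperbolic polygon construction of Behn--Rodr\'iguez--Rojas \cite{poligono}: starting from the generating vector $\sigma_0$, one builds a fundamental polygon for the surface, and the side-pairings yield an explicit symplectic basis of $H_1(S,\mathbb{Z})$ on which the $G$-action is read off geometrically. The paper then records the output of the routine \emph{P.symplectic\_generators}, namely $\rho(r)=\mathrm{diag}(R,(R^t)^{-1})$ and $\rho(s)=\mathrm{diag}(S,S^t)$ for specific $5\times 5$ integer matrices $R,S$, and applies the companion routine \emph{P.moebius\_invariant} to solve the invariance equations. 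So the obstacle you flag is not merely a technical nuisance but the crux of the argument, and the paper resolves it by a geometric input (uniformization and polygon combinatorics) rather than by representation theory. Without that input, your plan has no concrete $\rho_r(r),\rho_r(s)$ to feed into the fixed-point equations, and the specific rational coefficients in \eqref{inva} remain inaccessible.
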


\begin{proof} The proof is based on the results and routines in \cite{poligono} (implemented in the open source computer algebra system SAGE). 

By constructing a family of very special hyperbolic polygons that uniformize Riemann surfaces with a given group action, it was implemented, among others, routines to determine a symplectic representation of the group, and after that, those  matrices which are invariant.

We consider the generating vector $\sigma_0=(s, sr^{-2}, r^5, r^7)$ of $G=\mathbf{D}_{10}.$ By applying the routine {\it P.symplectic.generators}, we obtain that, if $\rho$ denotes the symplectic representation of $G,$ then $$\rho(r)=\mbox{diag}(R, (R^t)^{-1}) \hspace{0.2 cm} \mbox{ and }\hspace{0.2 cm}\rho(s)=\mbox{diag}(S, S^t),$$where  \begin{displaymath}
R=\left(\begin{smallmatrix}
-1  &   0   &  1  &  -1   &  1  \\
1   &   1   &  0  &    1  &  0  \\
0   &   0   &  0  &    -1& 0  \\
1   &   0   &  0  &    1& -1 \\
0   &  -2   &  0  &  1 & -1
\end{smallmatrix}\right) \hspace{0.2 cm} \mbox{ and }\hspace{0.2 cm} S=\left(\begin{smallmatrix}
-1  &   0   &  1  &  -1   &  1  \\
0   &   -1   &  -1  &    1  &  -1  \\
0   &   0   &  0  &    -1& 0  \\
0   &   0   & -1  &    0& 0 \\
0   &  0   &  0  &  0 & 1
\end{smallmatrix}\right)
\end{displaymath}

\s

The problem of finding those period matrices in $\mathscr{H}_5$ which are invariant under the given action involves solving a system of nonlinear equations in fifteen variables. If we apply the routine {\it P.moebius.invariant}, the desired form is obtained. 
\end{proof}

The automorphism group $G$ of $S$ can be canonically seen as a subgroup of $L_S.$ Thus, the variety $\mathscr{H}_g({G})$ contains the complex submanifold $\mathbb{H}(L_S)$ whose points are matrices representing ppavs containing $L_S$ in their endomorphism algebras; see \cite[Section 3]{Wolfart2} and also \cite[Sections 2 and 3]{Ru} for a more general context. This is called the {\it Shimura family} of $S$ and corresponds to a {\it special subvariety} of $\mathcal{A}_g$ (see \cite[Section 3]{Moonen} for a precise definition).

\begin{prop}
Let $\lambda \in \Omega.$ The dimension of the Shimura family of each Riemann surface $S_{\lambda}$ in $\mathcal{F}_q$ is $\tfrac{q+1}{2}.$
\end{prop}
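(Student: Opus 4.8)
The plan is to compute $\dim \mathbb{H}(L_{S_{\lambda}})$ by using that the Shimura family is assembled from the simple factors of the endomorphism algebra $L_{S_{\lambda}} = \mbox{End}_{\mathbb{Q}}(JS_{\lambda})$, each contributing an independent classical period domain. First I would determine the structure of $L_{S_{\lambda}}$. By Theorem \ref{theo2} one has $JS_{\lambda} \sim_G JE_{\lambda} \times JC_{\lambda}^2$, where $B_4 \sim JE_{\lambda}$ is an elliptic curve and $B_5 \sim JC_{\lambda}$ has dimension $\tfrac{q-1}{2}$. Since $B_4$ and $B_5$ belong to the non-isomorphic rational irreducible representations $W_4$ and $W_5$, there are no nonzero $G$-equivariant homomorphisms between them, and the isotypical decomposition gives $L_{S_{\lambda}} \cong \mbox{End}_{\mathbb{Q}}(JE_{\lambda}) \times \mbox{End}_{\mathbb{Q}}(B_5^2)$. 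For generic $\lambda$ one has $\mbox{End}_{\mathbb{Q}}(JE_{\lambda}) = \mathbb{Q}$, while the degree-two representation $V_5$ together with its Galois conjugates forces the totally real field $F = \mathbb{Q}(\omega_{2q} + \bar{\omega}_{2q})$, of degree $\tfrac{q-1}{2}$ over $\mathbb{Q}$, into $\mbox{End}_{\mathbb{Q}}(B_5)$; since $\dim B_5 = [F : \mathbb{Q}]$, this is all of it, so $B_5$ has real multiplication by $F$ and $\mbox{End}_{\mathbb{Q}}(B_5^2) \cong M_2(F)$. Hence $L_{S_{\lambda}} \cong \mathbb{Q} \times M_2(F)$.

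I would then invoke the fact that the central idempotents of $L_{S_{\lambda}}$ induce a canonical $G$-equivariant splitting of every ppav parametrized by $\mathbb{H}(L_{S_{\lambda}})$, a splitting preserved by the Rosati involution; consequently $\mathbb{H}(L_{S_{\lambda}})$ is the product of the period domains attached to the two simple factors, and its dimension is the sum of theirs. The factor coming from $JE_{\lambda}$, with endomorphism algebra $\mathbb{Q}$, is the moduli space of elliptic curves and has dimension $1$. The factor coming from $B_5^2$, with endomorphism algebra $M_2(F)$, is by Morita equivalence the moduli space of abelian varieties with real multiplication by $F$; as $\dim B_5 = [F : \mathbb{Q}] = \tfrac{q-1}{2}$, this is a Hilbert modular variety of dimension $\tfrac{q-1}{2}$. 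Adding the two contributions yields $\dim \mathbb{H}(L_{S_{\lambda}}) = 1 + \tfrac{q-1}{2} = \tfrac{q+1}{2}$.

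The main obstacle is the precise determination of $L_{S_{\lambda}}$, in particular the verification that $B_5$ carries real multiplication by the full field $F$ and not by a proper subfield; this I would extract from the way $V_5$ and its Galois conjugates occur in the analytic representation, together with the equality $\dim B_5 = [F : \mathbb{Q}]$. As an independent check, and in order to identify $\mathbb{H}(L_{S_{\lambda}})$ with the ambient locus $\mathscr{H}_q(G)$ of ppavs admitting the given $G$-action, I would compute $\dim \mathscr{H}_q(G)$ directly as $\dim (\mbox{Sym}^2 V)^G$, where $V = V_4 \oplus \bigoplus_{\sigma \in G_5} V_5^{\sigma}$ is the analytic representation. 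Here $(\mbox{Sym}^2 V_4)^G$ is one-dimensional, each of the $\tfrac{q-1}{2}$ summands $(\mbox{Sym}^2 V_5^{\sigma})^G$ is one-dimensional, and none of the mixed tensor products contains the trivial representation; this again gives $1 + \tfrac{q-1}{2} = \tfrac{q+1}{2}$, in agreement with the explicit three-parameter family found in Theorem \ref{invat} for $q = 5$.
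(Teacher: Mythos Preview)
Your ``independent check'' is essentially the paper's proof: the paper cites \cite{frediani} and Serre's formula to write the dimension as $\frac{1}{|G|}\sum_{g\in G}\tfrac{1}{2}[\chi(g)^2+\chi(g^2)] = \dim(\mathrm{Sym}^2\rho_a)^G$, identifies $\rho_a\cong W_4\oplus W_5$ via Chevalley--Weil, and evaluates the character sum directly to get $\tfrac{q+1}{2}$. You compute the same invariant $\dim(\mathrm{Sym}^2 V)^G$ by decomposing $V$ and counting the one-dimensional fixed subspaces in each $\mathrm{Sym}^2 V_4$ and $\mathrm{Sym}^2 V_5^{\sigma}$; this is the same quantity reached by a slightly different bookkeeping.

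Your primary argument, via the structure $L_{S_\lambda}\cong \mathbb{Q}\times M_2(F)$ and the ensuing product of period domains (upper half-plane times Hilbert modular variety for $F$), is a genuinely different and more geometric route. It has the advantage of explaining \emph{why} the answer is $1+\tfrac{q-1}{2}$ in terms of the moduli of the factors $E_\lambda$ and $C_\lambda$. Its weak point, which you correctly flag, is the exact determination of $L_{S_\lambda}$: the inference ``$\dim B_5=[F:\mathbb{Q}]$, so $\mathrm{End}_{\mathbb{Q}}(B_5)=F$'' does not by itself exclude a CM field or a quaternion algebra over a subfield, and for special $\lambda$ the endomorphism algebra can genuinely be larger (e.g.\ when $E_\lambda$ has CM). The paper sidesteps this entirely by working with the PEL datum coming from the $G$-action rather than with the full $L_{S_\lambda}$, which is why its formula depends only on the analytic representation and not on $\lambda$. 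Your $\mathrm{Sym}^2$ computation is therefore the part of your argument that actually matches the stated proposition for \emph{every} $\lambda$; the endomorphism-algebra route, as written, establishes it only generically.
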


\begin{proof} 
Following the results proved in \cite{frediani} and Serre's formula \cite[Proposition 3]{Serre}, it can be seen that the dimension $N$ of the Shimura family of $S_{\lambda}$ is given by $$\tfrac{1}{8q} \Sigma_{g \in G} [\chi(g)^2+\chi(g^2)],$$where $\chi$ stands for the character of the analytic representation $\rho_a$ of $G.$ Clearly, this dimension does not depend on $\lambda;$ in fact, it only depends on the local monodromy of the action of $G$ on $S_{\lambda}$.

Now, by using the classically known Chevalley-Weil formula \cite{cw}, we obtain that $$\rho_a \cong W_4 \oplus W_5.$$

The character of $\rho_a$ is summarized in the following table: \begin{center}
\begin{tabular}{|c|c|c|c|c|c|c|}  \hline
rep. of conj. class &  $id$ & $s$ &  $sr$ & $r^q$ &  $r^{t}$\\ \hline
lenght  & $1$ & $q$ & $q$ & $1$ & 2 \\ \hline
character  & $q$ & $-1$ & $1$ & $-q$ & 0 \\ \hline
\end{tabular}
\end{center}where $1 \le t \le q-1.$ It follows that $$N=\tfrac{1}{8q}[(q^2+q) +(1+q)q+(1+q)q+(q^2+q)]=\tfrac{q+1}{2}.$$\end{proof}

Given a Riemann surface $S$, to provide an explicit description of the elements of $\mathbb{H}(L_S)$ seems to be a difficult task. However, as a simple consequence of Theorem \ref{invat}, we obtain the following direct corollary:

\begin{coro}
Each element of the Shimura family associated to every member of the family $\mathcal{F}_5$ admits a period matrix of the form \eqref{inva} for some $u,v,w \in \mathbb{C}.$ 
\end{coro}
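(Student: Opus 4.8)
The plan is to recognize that this corollary is a straightforward combination of two previously established facts, so the proof will be short and essentially a matter of citing and connecting them correctly. First I would recall the definition of the Shimura family $\mathbb{H}(L_S)$ as the set of period matrices in $\mathscr{H}_g(G)$ representing ppavs whose endomorphism algebra contains $L_S = \mbox{End}_{\mathbb{Q}}(JS)$. Since $G$ embeds canonically into $L_S$ via the symplectic (rational) representation, any ppav in $\mathbb{H}(L_S)$ in particular admits the given $G$-action, and hence its period matrix lies in $\mathscr{H}_g(G)$. This gives the fundamental inclusion $\mathbb{H}(L_{S_\lambda}) \subseteq \mathscr{H}_5(\mathbf{D}_{10})$ for each $S_\lambda$ in $\mathcal{F}_5$.

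Next I would invoke Theorem \ref{invat}, which explicitly computes $\mathscr{H}_5(\mathbf{D}_{10})$ as the three-dimensional family $\mathcal{A}_5(\mathbf{D}_{10})$ whose members are exactly the matrices of the form \eqref{inva} parametrized by $u,v,w \in \mathbb{C}$. Since every period matrix invariant under the symplectic action of $\mathbf{D}_{10}$ has this shape, and every element of the Shimura family is such an invariant matrix by the inclusion above, it follows immediately that each element of the Shimura family of any $S_\lambda$ in $\mathcal{F}_5$ has a period matrix of the form \eqref{inva}. Concretely, the argument is: Shimura family $\subseteq \mathscr{H}_5(\mathbf{D}_{10}) = \mathcal{A}_5(\mathbf{D}_{10}) = \{$matrices of the form \eqref{inva}$\}$.

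The step requiring the most care is justifying the first inclusion, namely that membership in the Shimura family forces the $\mathbf{D}_{10}$-invariance of the period matrix. The key point is that the group $\mathcal{G} = \{R \in \mbox{Sp}(10,\mathbb{Z}) : R \cdot Z_{S_\lambda} = Z_{S_\lambda}\}$, isomorphic to $G$, arises from automorphisms of $S_\lambda$, and since these automorphisms persist as endomorphisms generating a subalgebra of $L_{S_\lambda}$, any ppav whose endomorphism algebra contains $L_{S_\lambda}$ must also be fixed by $\mathcal{G}$. This is precisely the content of the remark preceding the statement, that $G$ is canonically a subgroup of $L_S$ and therefore $\mathbb{H}(L_S) \subseteq \mathscr{H}_g(G)$. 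Once this is granted, the corollary is purely formal. I do not anticipate any genuine obstacle here, as the real computational work, namely the explicit determination of \eqref{inva}, was already carried out in the proof of Theorem \ref{invat}; the corollary merely packages that result together with the defining inclusion of the Shimura family.
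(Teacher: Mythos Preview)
Your proposal is correct and matches the paper's approach exactly: the paper states the corollary as a direct consequence of Theorem~\ref{invat}, relying on the inclusion $\mathbb{H}(L_S) \subseteq \mathscr{H}_g(G)$ noted in the preceding discussion, which is precisely the chain of reasoning you spell out. The paper gives no further proof, so your elaboration of the two ingredients (the canonical containment $G \hookrightarrow L_S$ forcing the inclusion, and the explicit description of $\mathscr{H}_5(\mathbf{D}_{10})$ from Theorem~\ref{invat}) is a faithful expansion of what the paper leaves implicit.
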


\s
\s



\begin{thebibliography}{9}

\bibitem{poligono}
{\sc{A. Behn, R. E. Rodr\'iguez and A. M. Rojas,}}  {\em{Adapted hyperbolic polygons and symplectic representations for group actions on Riemann surfaces,}} J. Pure Appl. Algebra {\bf 217} (2013), no. 3, 409--426.

\bibitem{Belyi}
{\sc G. Belyi,} {\em On Galois Extensions of a Maximal cyclotomic field}, Math. USSR Izvestija, {\bf 14,} (1980), 247--265.

\bibitem{bl}
{\sc Ch. Birkenhake and H. Lange},
 { \em Complex Abelian Varieties,} $2^{nd}$ edition,
Grundl. Math. Wiss. {\bf 302}, Springer, 2004.

\bibitem{Brou}
{\sc{S. A. Broughton,}}  {\em{Finite groups actions on surfaces of low genus,}} J. Pure Appl. Algebra {\bf 69} (1991), no. 3, 233--270.


\bibitem{libroemilio}
{\sc{E. Bujalance, F. J. Cirr\'e, J. M. Gamboa and G. Gromadzki,}}  {\em{Symmetries of compact Riemann surfaces,}} Lecture Notes in Mathematics, 2007. Springer-Verlag, Berlin, 2010. xx+158 pp.



\bibitem{gamboa}
{\sc{E. Bujalance, F. J. Cirr\'e, J. M. Gamboa and G. Gromadzki,}}  {\em{Symmetry types of hyperelliptic Riemann surfaces,}} M\'em. Soc. Math. Fr. (N. S), {\bf 86} (2001) vi+122 pp.

\bibitem{costa}
{\sc{E. Bujalance, A. F. Costa and M. Izquierdo,}}  {\em{On Riemann surfaces of genus g with 4g automorphisms,}} Topology Appl. {\bf 218} (2017) 1--18.

\bibitem{d1}
{\sc A. Carocca, S. Recillas and R. E. Rodr\'iguez},
 { \em Dihedral groups acting on Jacobians,} Contemp. Math. {\bf 311} (2011), 41--77.
 
\bibitem{cr}
{\sc A. Carocca and R. E. Rodr\'iguez,}
{\em Jacobians with group actions and rational idempotents.}
J. Algebra \textbf{306} (2006), no. 2, 322--343.

\bibitem{CHQ}
{\sc M. Carvacho, R. A. Hidalgo and S. Quispe,}
{\em Jacobian variety of generalized Fermat curves,}
Q. J. Math. {\bf 67} (2016), no. 2, 261--284.

\bibitem{cw}
{\sc C. Chevalley and A. Weil,}
{\em \"{U}ber das Verhalten der Integrale erster Gattung bei Automorphismen des Funktionenk\"{o}rpers,} Hamb. Abh. {\bf 10} (1934), 358--361.




\bibitem{CIn}
{\sc A. F. Costa and M. Izquierdo,}
{\em One-dimensional families of Riemann surfaces of genus $g$ with $4g + 4$ automorphims,}
RACSAM, DOI 10.1007/s13398-017-0429-0


\bibitem{debarre}
{\sc O. Debarre,}
{\em Tores et vari\'et\'es ab\'eliennes complexes,}
Cours Sp\'ecialis\'es, {\bf 6}. Soci\'et\'e Math\'ematique de France, Paris; EDP Sciences, Les Ulis (1999).

\bibitem{DE}
{\sc P. D\`ebes and M. Emsalem,}
{\em On Fields of Moduli of Curves,}
J. of Algebra {\bf 211} (1999), 42--56.

\bibitem{Earle}
{\sc C. J. Earle.}
{\em On the moduli of closed Riemann surfaces with symmetries.}
 Advances in the Theory of Riemann Surfaces (1971), 119-130. Ed. L.V. Ahlfors et al. 
(Princeton Univ. Press, Princeton).


\bibitem{frediani}
{\sc P. Frediani, A. Ghigi, and M. Penegini,}
 {\em Shimura varieties in the Torelli
locus via Galois coverings}, Int. Math. Res. Not. {\bf 20} (2015), 10595--10623.

\bibitem{lef}
{\sc G. Gonz\'alez-Diez,} {\em Belyi's Theorem for complex surfaces}, Amer. J. Math. {\bf 130}, No. 1 (2008), 59--74.


\bibitem{criterio1}
{\sc G. Gonz\'alez-Diez,} {\em Variations on Belyi's Theorem}, Q. J. Math. {\bf 57}, (2006), 339--354.


\bibitem{th1}
{\sc G. Gonz\'alez-Diez and S. Reyes-Carocca,} {\em Families of Riemann surfaces, uniformization and arithmeticity,} Trans. Amer. Math. Soc. {\bf 370} (2018), no. 3, 1529--1549.

\bibitem{th2}
{\sc G. Gonz\'alez-Diez and S. Reyes-Carocca,} {\em The arithmeticity of a Kodaira fibration is determined by its universal cover,}  Comment. Math. Helv. {\bf 90} (2015), no. 2, 429--434.


\bibitem{Gro}
{\sc A. Grothendieck,} {\em Esquisse d'un Programme}. In Geometric Galois Actions 1, Around Grothendieck's Esquisse d'un Programme. ed. Schneps, L. and Lochak, P., London Math. Soc. Lecture Note Ser. {\bf 242} (1997), 5--48.

\bibitem{HH}
{\sc H. Hammer and F. Herrlich,}
{\em A remark on the moduli field of a curve},
Arch. Math. {\bf 81} (2003), 5--10.

\bibitem{Harvey}
{\sc J. Harvey,}
{\em On branch loci in Teichm\"{u}ller space},
Trans. Amer. Math. Soc. {\bf 153} (1971), 387--399.

\bibitem{Hid}
{\sc R.A. Hidalgo}.
{\em Non-hyperelliptic Riemann surfaces with real field of moduli but not definable over the reals.}
Archiv der Mathematik {\bf 93} (2009), 219-222.

\bibitem{nos}
{\sc R. A. Hidalgo, L. Jim\'enez, S. Quispe and S. Reyes-Carocca,} {\em Quasiplatonic curves with symmetry group $\mathbb{Z}_2^2 \rtimes \mathbb{Z}_m$ are definable over $\mathbb{Q}$,} Bull. London Math. Soc. {\bf 49} (2017) 165--183.

\bibitem{rubenyo}
{\sc R. A. Hidalgo and S. Reyes-Carocca,}
{\em A constructive proof of Weil's Galois descent theorem}, 	arXiv:1203.6294.

\bibitem{yo1}
{\sc R. A. Hidalgo and S. Reyes-Carocca,}
{\em Fields of moduli of classical Humbert curves}, Q. J. Math. {\bf 63} (2012), no. 4, 919--930.

 

\bibitem{Hubbard}
{\sc{J. Hubbard,}} {\em{Teichm\"{u}ller theory and applications to geometry, topology, and dynamics}}, Vol. 1, Matrix Editions, Ithaca, New York (2006).

\bibitem{hugg}
{\sc B. Huggins,}
{\em Fields of moduli of hyperelliptic curves}, Math. Research Letters {\bf12} no.2 (2007), 249--262.

\bibitem{IJR}
{\sc M. Izquierdo, L. Jim\'enez, A. Rojas,}
{\em Decomposition of Jacobian varieties of curves with dihedral actions via equisymmetric stratification}, arXiv:1609.01562. 

\bibitem{yo}
{\sc L. Jim\'enez,} {\em On the group algebra decomposition of a Jacobian variety,} 
Rev. R. Acad. Cienc. Exactas Fis. Nat. Ser. A Math. RACSAM {\bf 110} (2016), no. 1, 185--199. 

\bibitem{KR} 
{\sc R. Kani and M. Rosen,} {\em Idempotent relations and factors of Jacobians}, 
Math. Ann. {\bf 284} (1989) 307-327.

\bibitem{K1}
{\sc R. S. Kulkarni,}
{ \em A note on Wiman and Accola-Maclachlan surfaces}.
Ann. Acad. Sci. Fenn., Ser. A 1 Math. {\bf 16} (1) (1991)
83--94.

\bibitem{K2}
{\sc R. S. Kulkarni,}
{ \em Riemann surfaces admitting large automorphism groups}, in: Extremal Riemann Surfaces, San Francisco, CA, 1995, in: Contemp. Math., vol. 201, Amer. Math. Soc., Providence, RI, 1997, pp. 63--79.


\bibitem{l-r}
{\sc H. Lange and S. Recillas,}
{ \em Abelian varieties with group actions}.
J. Reine Angew. Mathematik, \textbf{575} (2004) 135--155.

\bibitem{McB}
{\sc A. M. McBeath,}
{\em The classification of non-euclidean crystallographic groups,} Canad. J. Math. {\bf 19} (1966), 1192--1205.

\bibitem{McB2}
{\sc A. M.  McBeath,} {\em Generators of the Linear Fractional Groups}, in: 1969 Num- ber Theory (Proc. Sympos. Pure Math. vol. XII, Houston, Tex., 1967) pp. 14--32

\bibitem{Milne}
{\sc J. S. Milne,}
{\em Jacobian varieties,} G. Cornell and J.H. Silverman, editors, {\em Arithmetic
Geometry}. Springer-Verlag (1986), 167--212.

\bibitem{Moonen}
{\sc B. Moonen and F. Oort,} 
{\em The Torelli locus and special subvarieties,} In Handbook of Moduli: Volume II, pages 549--594. International Press, Boston, MA, 2013.

\bibitem{Naka}
{\sc K. Nakagawa}, {\em On the orders of automorphisms of a closed Riemann surface}, Pac. J. Math. {\bf 115} (2) (1984) 435--443.

\bibitem{PA}
{\sc J. Paulhus and A. M. Rojas,}
{ \em Completely decomposable Jacobian varieties in new genera},  Experimental Mathematics {\bf 26} (2017), no. 4, 430--445.

\bibitem{yo2}
{\sc S. Reyes-Carocca,}
{\em Field of moduli of generalized Fermat curves.} Q. J. Math. {\bf 63} (2012), no. 2, 467--475.


\bibitem{rubiyo}
{\sc S. Reyes-Carocca and R. E. Rodr\'iguez,}
{ \em A generalisation of Kani-Rosen decomposition theorem for Jacobian varieties}, 
arXiv:1702.00484, To appear in Ann. Sc. Norm. Super. Pisa Cl. Sci. doi: 10.2422/2036-2145.201706-003

\bibitem{rubiyo2}
{\sc S. Reyes-Carocca and R. E. Rodr\'iguez,}
{ \em On Jacobians with group action and coverings}, 
arXiv:1711.07552

\bibitem{yoibero}
{\sc A. M. Rojas}, {\em Group actions on Jacobian varieties}, Rev. Mat. Iber. {\bf 23} (2007), 397--420.

\bibitem{Ru}
{\sc B. Runge}, {\em On algebraic families of polarized abelian varieties}, Abh. Math. Sem. Univ. Hamburg {\bf 69} (1999), 237--258

\bibitem{d4}
{\sc A. S\'anchez-Arg\'aez}, {\em Actions of the group $A_5$ in Jacobian varieties}, Aportaciones Mat. Comun. {\bf 25}, Soc. Mat. Mexicana, M\'exico (1999), 99--108.




\bibitem{Serre}
{\sc J. P. Serre}, {\em Linear Representations of Finite Groups} Graduate Texts in Maths {\bf 42}.

\bibitem{singerman2}
{\sc D. Singerman}, {\em Finitely maximal Fuchsian groups}, J. London Math. Soc. (2)  {\bf 6}, (1972), 29--38.

\bibitem{singerman}
{\sc D. Singerman}, {\em Subgroups of Fuchsian groups and finite permutation groups}, Bull. London Math. Soc.  {\bf 2}, (1970), 319--323.



\bibitem{Shimura}
{\sc G. Shimura.}
{\em On the field of rationality for an abelian variety.}
Nagoya Math. J. {\bf 45} (1972), 167-178.

\bibitem{Weil}
{\sc A. Weil,} {\em The field of definition of a variety},
Amer. J. Math. {\bf 78} (1956), 509--524.

\bibitem{Wi}
{\sc A. Wiman,} {\em \"{U}ber die hyperelliptischen Curven und diejenigen von Geschlechte p - Jwelche eindeutige Tiansformationen in sich zulassen}. Bihang till K. Svenska Vet.-Akad. Handlingar, Stockholm {\bf 21} (1895-6) 1--28.

\bibitem{Wolfart}
{\sc J.  Wolfart,} {\em $ABC$ for polynomials, dessins d'enfants and uniformization---a survey}. Elementare und analytische Zahlentheorie, 313--345, Schr. Wiss. Ges. Johann Wolfgang Goethe Univ. Frankfurt am Main, 20, Franz Steiner Verlag Stuttgart, Stuttgart, 2006.

 \bibitem{Wolfart2}
{\sc J.  Wolfart,} {\em Regular dessins, Endomorphisms of Jacobians and Transcendece}, A Panorama of Number Theory or the View from Baker's Garden, ed. G. W\"{u}stholz, Cambridge University Press, Cambridge (2002), 107--120.

\end{thebibliography}
\end{document}